\def\beq{\begin{equation}}
\def\eeq{\end{equation}}
\def\ba{\begin{array}}
\def\ea{\end{array}}
\def\R{\mathbb R}
\def\Z{\mathbb Z}
\newcommand{\rmnote}[1]{}%{\mnote{#1}}
\numberwithin{equation}{section}
\newenvironment{abs}{\textbf{Abstract}\mbox{  }}{ }
\newenvironment{key words}{\textbf{Keywords}\mbox{  }}{ }
\newtheorem{theorem}{Theorem}[section]
\newtheorem{corollary}[theorem]{\textbf{Corollary}}
\newtheorem{proposition}[theorem]{\textbf{Proposition}}
\newtheorem{lemma}[theorem]{Lemma}
\renewenvironment{proof}{\noindent{\textbf{Proof.}}}{\hfill$\Box$}
\theoremstyle{remark}
\theoremstyle{plain}
\begin{document}
\title{\textbf{Overdetermined elliptic problems in onduloid-type domains with general nonlinearities}}

\author{David Ruiz}
\address{(D.~Ruiz)
Departamento de An\'alisis matem\'atico, Universidad de Granada,
Campus Fuentenueva, 18071 Granada, Spain} \email{daruiz@ugr.es}

\author{Pieralberto Sicbaldi}
\address{(P.~Sicbaldi)
Departamento de An\'alisis matem\'atico,
Universidad de Granada,
Campus Fuentenueva,
18071 Granada,
Spain \& Aix Marseille Universit\'e - CNRS, Centrale Marseille - I2M, Marseille, France}
\email{pieralberto@ugr.es}

\author{Jing Wu}
\address{(J.~Wu)
Departamento de An\'alisis matem\'atico, Universidad de Granada,
Campus Fuentenueva, 18071 Granada, Spain} \email{jingwulx@correo.ugr.es}

\thanks{D. R. has been supported by the FEDER-MINECO Grant PGC2018-096422-B-I00 and by J. Andalucia (FQM-116). P. S has been supported by the FEDER-MINECO Grant MTM2017-89677-P and by J. Andalucia (P18-FR-4049 and A-FQM-139-UGR18). J. W. has been supported by the China Scholarship Council (CSC201906290013) and by J. Andalucia (FQM-116).}

\thanks{ The authors acknowledge financial support from the Spanish Ministry of
Science and Innovation (MICINN), through the \emph{Severo Ochoa and Mar\'{\i}a de
Maeztu Programme for Centres and Unities of Excellence}
(CEX2020-001105-M)}

\maketitle

\maketitle

% ----------------------------------------------------------------
\noindent
\begin{abs}
In this paper, we prove the existence of nontrivial unbounded domains $\Omega\subset\mathbb{R}^{n+1},n\geq1$, bifurcating from the straight cylinder $B\times\mathbb{R}$ (where $B$ is the unit ball of $\mathbb{R}^n$), such that the overdetermined elliptic problem
\begin{equation*}
  \begin{cases}
  \Delta u +f(u)=0 &\mbox{in $\Omega$, }\\
  u=0 &\mbox{on $\partial\Omega$, }\\
  \partial_{\nu} u=\mbox{constant} &\mbox{on $\partial\Omega$, }
  \end{cases}
\end{equation*}
has a positive bounded solution. We will prove such result for a very general class of functions $f: [0, +\infty) \to \mathbb{R}$. Roughly speaking, we only ask that the Dirichlet problem in $B$ admits a nondegenerate solution. The proof uses a local bifurcation argument.
\end{abs}\\
\begin{key words}: Overdetermined boundary conditions; semilinear elliptic problems; bifurcation theory.
\end{key words}\\
\textbf{Mathematics Subject Classification(2010):}
35J61, 35N15
 \indent
%---------------------------------------------------------------------------------

%%% ---------------------------------------------------

\section{Introduction and statement of the main result}
\label{Section 1}

This paper is devoted to the existence of new solutions of a semilinear overdetermined elliptic problem in the form
\begin{gather}\label{eq11}
\begin{cases}
  \Delta u +f(u)=0 &\mbox{in $\Omega$, } \\
  u>0 &\mbox{in $\Omega$, } \\
  u=0 &\mbox{on $\partial\Omega$, }\\
  \partial_{\nu} u=\mbox{constant} &\mbox{on $\partial\Omega$, }
  \end{cases}
\end{gather}
where $\Omega$ is a domain of $\mathbb{R}^{n+1},n\geq1, f: [0, +\infty) \to \mathbb{R}$ is a $C^{1,\alpha}$ function and $\nu$ stands for the exterior normal unit vector on $\partial\Omega$.

\medskip

A classical result by J. Serrin ~\cite{S71} (see also \cite{PS07}) states that the existence of a solution to the overdetermined problem (\ref{eq11}) in a bounded domain $\Omega$ implies that $\Omega$ is a ball and the solution $u$ is radially symmetric. The proof uses the reflection principle in the spirit of the result of Alexandrov for compact embedded CMC hypersurfaces (\cite{ale}). The result of Serrin has applications in various mathematical and physical problems, such as isoperimetric inequalities, spectral geometry and hydrodynamics (see, e.g., ~\cite{BK06,S02,S56} for the details).

\medskip

The case when the domain $\Omega$ is supposed to be unbounded is also of interest. Indeed, overdetermined boundary conditions appear in free boundary problems if the variational structure imposes suitable conditions on the separation interface (see, e.g. ~\cite{AC81,CJK04}). In this context, the methods used to study the regularity of the solutions of a free boundary problem are based on blow-up techniques, and then lead to the study of a semilinear overdetermined elliptic problem in an unbounded domain. In this framework, H. Berestycki, L. Caffarelli and L. Nirenberg ~\cite{BCN97} considered the problem (\ref{eq11}) in unbounded domains $\Omega$ and they proposed the following:

\medskip

BCN Conjecture. Assume that $\Omega$ is a smooth domain such that $\mathbb{R}^{n}\backslash \Omega$ is connected. Then the existence of a bounded solution to the problem (\ref{eq11}) for some Lipschitz function $f$ implies that $\Omega$ is either a ball, a half-space, a generalized cylinder $B^{k}\times\mathbb{R}^{n-k}$ ($B^{k}$ is a ball in $\mathbb{R}^{k}$), or the complement of one of them.

\medskip

Such conjecture was motivated first by the result of Serrin for bounded domains, and for unbounded domains by some rigidity results obtained in epigraphs (\cite{BCN97}) and exterior domains (\cite{AB98, R97}). The BCN Conjecture actually has motivated various interesting works giving an affirmative answer for some classes of overdetermined elliptic problems. Let us now briefly describe some of such results. In \cite{FV101} the authors get an affirmative answer under the hypothesis that $\Omega$ is an epigraph of $\R^2$ or $\R^3$ and the function $u$ satisfies some natural assumptions. Moreover, in \cite{RS13} the BCN conjecture is proved in the plane for some classes of nonlinearities $f$. In the harmonic case $f=0$ a complete classification of solutions to the problem \eqref{eq11} in the plane has been given in \cite{T13}. Moreover, the work \cite{RRS17}, proves the BCN conjecture in dimension $2$ if $\partial\Omega$ is connected and unbounded.

\medskip

It turns out, however, that the BCN conjecture does not hold true in general. The first counterexample was given in \cite{S10}, where the second author constructed domains obtained by periodic perturbation of the straight cylinder $B^{n}\times\mathbb{R}$ for which the problem (\ref{eq11}) with $f(u)=\lambda u, \lambda>0$, admits a solution. More precisely, such domains, as shown in \cite{SS12}, belong to a 1-parameter family $\{\Omega_s\}_{s \in (-\epsilon,\epsilon)}$ and are given by
\[
\Omega_s \,=\,
\left\{
(x,t) \in \mathbb{R}^{n} \times \mathbb{R}  \, : \  |x| <
1+s \cos \left( \frac{2\pi}{T_s}\,t \right) + O(s^2)
\right\}
\]
where $\epsilon$ is a small constant, $T_s = T_0 + O(s)$ and $T_0$ depends only on the dimension $n$. This result reinforces the analogy between domains that allow a solution of \eqref{eq11} and CMC surfaces, as the domain $\Omega_s$ can be put in correspondence to the onduloid (or Delaunay surface).

In \cite{FMW17} the same kind of result is proved in the case $f\equiv1$. In \cite{DPW15} similar solutions are found for the Allen-Cahn nonlinearity $f(u)=u-u^3$, but in domains that are perturbations of a dilated straight cylinder, i.e. perturbations of $(\epsilon^{-1}\,B^{n})\times\mathbb{R}$ for $\epsilon$ small, or more in general domains that are perturbations of a dilation of the region contained in an onduloid. The result in \cite{S10} has been generalized in some Riemannian manifolds with symmetry in \cite{MS16}

\medskip

The aim of this paper is to perform such a construction under somewhat minimal assumptions on the nonlinearity $f(u)$. It is clear that a mandatory assumption is the existence of a solution of the Dirichlet problem in the unit ball $B$ in $\R^n$. For technical reasons we need the normal derivative at the boundary to be nonzero, which is a typical situation in overdetermined semilinear elliptic problems. Hence, we will consider the following hypothesis:

\medskip

\textbf{Assumption 1:} There exists a positive solution ${\phi}_{1}\in C^{2,\alpha}(\overline{B})$ of the problem
\begin{equation}\label{eq214}
\begin{cases}
\Delta{\phi}_1+f({\phi}_1)=0 &\mbox{in $B$, }\\
%\tilde{\phi}>0 &\mbox{in $B$ },\\
{\phi}_1=0 &\mbox{on $\partial B$,}
\end{cases}
\end{equation}
%possesses a radial solution $\tilde{\phi}_{1}\in C^{2,\alpha}_{0,r}(B),$ and its
with $\partial_\nu \phi_1 (x) \neq 0$ for $x \in \partial B$, where $\nu$ denotes the exterior unit vector normal to $\partial B$.

\medskip Observe that by \cite{gnn}, any solution $\phi_1$ of \eqref{eq214} needs to be a radially symmetric function. For technical reasons, we need to assume also that the linearized operator associated to problem \eqref{eq11} at $\phi_1$ is non-degenerate (in a radially symmetric setting). This is a rather natural assumption if one intends to use a perturbation argument. Precisely, our second assumption is:

\medskip

%In the two-dimensional case, Hauswirth, H\'{e}lein and Pacard in \cite{HHP11} provided a negative answer for $f=0$ in a strip-like domain. In \cite{RRS20}, Ros, Ruiz, and Sicbaldi found a perturbation of the complement of a ball $B_{R}$ that supports a bounded solution to the problem (\ref{eq11}), when $f$ is a nonlinear function $f(u)=u^{p}-u.$

\textbf{Assumption 2:} Define the linearized operator $L_{D}:C^{2,\alpha}_{0,r}(B)\rightarrow C^{0,\alpha}_{r}(B)$ by
\begin{equation*}%\label{operatorld}
  L_{D}(\phi)=\Delta\phi+f'({\phi}_{1})\phi\,,%~~\phi=0 ~~\mbox{on $\partial B$}.
\end{equation*}
where $C^{2,\alpha}_{0,r}(B)$ and $C^{0,\alpha}_{r}(B)$ denote the spaces of radial functions in $C^{2,\alpha}_{0}(B)$ and $C^{0,\alpha}(B)$ respectively.
%and $z_{1}\in C^{2,\alpha}(B)$ is the first eigenvalue of it.
We assume that
%\[z_{1}L(\phi)>0.\] Equivalently,
the linearized operator $L_{D}$ is non-degenerate; in other words, if $L_D(\phi)\equiv0$ then $\phi \equiv 0$.

\medskip

We are now in position to state our main result:
\begin{theorem} \label{Th11}
If $n \geq 1$, $f: [0,+\infty) \to \R$ is $C^{1,\alpha}$ and Assumptions 1 and 2 hold, then there exist a positive number $T_*$ and a continuous curve% sequence of real positive number $T_{m}\rightarrow T_{*}$, and a sequence of nonzero and nonconstant functions $v_{m}\in C^{2,\alpha}(\mathbb{R} / \mathbb{Z})$ of period $T_{m}$ converging to $0$ in $C^{2,\alpha}(\mathbb{R} / \mathbb{Z})$ such that %
$$\begin{array}{ccc}
(-\epsilon, \epsilon) & \to & C^{2,\alpha}(\mathbb{R} / \mathbb{Z}) \times \mathbb{R}\\
s & \mapsto & (v_s,T_s)
\end{array}$$
for some $\epsilon$ small, with $v_s=0$ if and only if $s=0$. Moreover $T_0=T_*$ and
the overdetermined problem \eqref{eq11} has a solution in the domain
\[
\Omega_s \,=\,
\left\{ (x,t) \in \mathbb{R}^{n} \times \mathbb{R}  \, :  \ |x| < 1 + v_s\left(\frac{t}{T_s}\right) \right\}\,.
\]
The solution $u= u_s$ of problem \eqref{eq11} is $T_s$-periodic in the variable $t$ and hence bounded. Moreover,
\[
\int_0^1v_{s}(t)\,dt=0.
\]
%and
%\[
%v_s(t) = s\, \cos(2\pi\, t) + \mathcal{O}(s^2).
%\]
\end{theorem}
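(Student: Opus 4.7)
The plan is to set up a local bifurcation of Crandall--Rabinowitz type from the trivial family $(\Omega,u)=(B\times\R,\phi_1)$, in the spirit of \cite{S10,SS12,FMW17}, but with the spectral analysis carried out under the general hypotheses of Assumptions 1 and 2. For $T>0$ and a small even, zero-mean profile $v\in C^{2,\alpha}(\R/\Z)$, set
\[
\Omega_{v,T}=\{(x,t)\in\R^n\times\R : |x|<1+v(t/T)\},
\]
and seek an $x$-radial, $t$-even, $T$-periodic positive solution $u_{v,T}$ of the Dirichlet problem $\Delta u+f(u)=0$, $u|_{\partial\Omega_{v,T}}=0$ close to $\phi_1$. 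Straightening the boundary via $(y,t)\mapsto((1+v(t/T))y,t)$ and pulling back turns the equation into a smooth quasilinear perturbation of $\Delta\phi_1+f(\phi_1)=0$ on $B\times(\R/T\Z)$. Fourier decomposition in $t$ reduces the linearization to the family of radial operators $L_k:=\Delta+f'(\phi_1)-(2\pi k/T)^2$ on $B$ with Dirichlet data; $L_0$ is invertible by Assumption~2, and each $L_k$ ($k\neq 0$) is invertible outside a discrete set of periods. The implicit function theorem then produces a smooth map $(v,T)\mapsto u_{v,T}$.

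\textbf{Bifurcation operator and linearization.} Define
\[
F:\mathcal U\subset C^{2,\alpha}_{m,e}(\R/\Z)\times(0,\infty)\to C^{1,\alpha}_{m,e}(\R/\Z),\quad F(v,T)(t)=\partial_\nu u_{v,T}\bigl|_{\partial\Omega_{v,T}}(t)-\langle\partial_\nu u_{v,T}\rangle,
\]
where the subscript $m,e$ denotes the zero-mean, even subspace and $\langle\cdot\rangle$ the mean over one period. Zeros of $F$ correspond exactly to the overdetermined solutions sought, and $F(0,T)\equiv0$ because $\phi_1$ is $t$-independent. A short computation using $\phi_1(1)=0$ and radiality shows that the Fr\'echet derivative $L_T:=D_vF(0,T)$ is diagonal on the cosine basis:
\[
L_T\bigl(\cos(2\pi k\,\cdot)\bigr)=\bigl(\phi_1''(1)-\mu(\omega_k)\bigr)\cos(2\pi k\,\cdot),\quad \omega_k=2\pi k/T,\ \mu(\omega):=w_\omega'(1),
\]
where $w_\omega$ is the unique radial solution in $B$ of $(\Delta+f'(\phi_1)-\omega^2)w=0$ with $w(1)=-\phi_1'(1)$.

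\textbf{Choice of $T_\ast$ and simple kernel.} A careful analysis of the radial ODE for $w_\omega$ (for instance, via a Riccati reformulation for $w_\omega'/w_\omega$ and a boundary-layer expansion as $\omega\to\infty$, which gives $\mu(\omega)\sim -\phi_1'(1)\,\omega\to+\infty$) shows that $\mu$ is smooth and strictly monotone on $[0,\infty)$, so its image covers $[\mu(0),\infty)$. Using $\phi_1''(1)=-(n-1)\phi_1'(1)-f(0)$ (obtained from the equation for $\phi_1$ at $|x|=1$), one verifies the crossing $\phi_1''(1)>\mu(0)$, and hence there is a unique $\omega_\ast>0$ with $\mu(\omega_\ast)=\phi_1''(1)$. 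Set $T_\ast:=2\pi/\omega_\ast$. Strict monotonicity rules out $\mu(k\omega_\ast)=\phi_1''(1)$ for $k\geq2$, so $\ker L_{T_\ast}=\mathrm{span}\{\cos(2\pi t)\}$ is one-dimensional, and $L_{T_\ast}$ is Fredholm of index zero as a Fourier multiplier $C^{2,\alpha}_{m,e}\to C^{1,\alpha}_{m,e}$.

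\textbf{Crandall--Rabinowitz and main obstacle.} The transversality condition $\partial_TL_T(\cos(2\pi\cdot))|_{T=T_\ast}\notin\mathrm{Range}(L_{T_\ast})$ reduces to $\mu'(\omega_\ast)\neq 0$, which follows once more from strict monotonicity. The Crandall--Rabinowitz theorem then produces a smooth curve $s\mapsto(v_s,T_s)$ with $v_0=0$, $T_0=T_\ast$, and tangent vector $\cos(2\pi t)$; the corresponding $(\Omega_s,u_s)$ solve \eqref{eq11}, $u_s$ is $T_s$-periodic in $t$ (hence bounded) and positive by continuity from $\phi_1>0$, and $\int_0^1 v_s=0$ is automatic from $v_s\in C^{2,\alpha}_{m,e}$. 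The main technical obstacle is the global analysis of the symbol $\mu(\omega)$ --- monotonicity, asymptotics, and the sign of $\phi_1''(1)-\mu(0)$ --- under only Assumptions 1 and 2, since the explicit formulas exploited in the linear case $f(u)=\lambda u$ (\cite{S10,SS12}) or in $f\equiv 1$ (\cite{FMW17}) are no longer available and must be replaced by qualitative ODE arguments.
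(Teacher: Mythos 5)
Your overall architecture matches the paper's: pass to the Dirichlet-to-Neumann operator on the perturbed cylinder, compute its linearization at $v=0$, and apply Crandall--Rabinowitz at a period $T_*$ where the kernel becomes one-dimensional. Where you genuinely diverge is the spectral analysis: the paper works variationally with the Robin quadratic form $Q$ on $H^1_r(B)$ and its first eigenvalue $\gamma_1$ (and the analogous forms $Q^T$, $Q_D^T$ on the cylinder), while you propose to diagonalize the linearized Dirichlet-to-Neumann map in Fourier modes and study the scalar symbol $\mu(\omega)=w_\omega'(1)$ by ODE methods. In the end the two parametrizations of the degeneracy are equivalent (your $\omega_*$ is $\sqrt{-\gamma_1}$), but they lead to different proofs.

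There is, however, a genuine gap. Your claim that ``$\mu$ is smooth and strictly monotone on $[0,\infty)$'' is false in general: the map $\omega\mapsto w_\omega$ has poles at every $\omega=\sqrt{-\gamma_{D_j}}$ for which $\gamma_{D_j}<0$ is a negative Dirichlet eigenvalue of $L_D$, and Assumptions 1 and 2 explicitly allow such eigenvalues (see the paper's example (2), $f(u)=u^p-u$, where the ground state is a mountain-pass solution and $\gamma_{D_1}<0$). Between consecutive poles $\mu$ is indeed strictly increasing (a standard computation, multiplying $(\Delta+f'(\phi_1)-\omega^2)\partial_\omega w_\omega=2\omega w_\omega$ by $w_\omega$ and integrating), but it jumps from $+\infty$ to $-\infty$ across each pole, so the ``crossing'' argument $\phi_1''(1)>\mu(0)$ is neither correct nor useful. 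You acknowledge that the $L_k$ may degenerate for a discrete set of periods, but you never resolve the central compatibility question: the bifurcation period $T_*$ must lie strictly below $\overline T = 2\pi/\sqrt{-\gamma_{D_1}}$, since only there is the nonlinear Dirichlet problem in $C^T_{1+v}$ solvable by the implicit function theorem (Corollary~\ref{cor}). Equivalently, the zero of the symbol must be found in $\omega>\sqrt{-\gamma_{D_1}}$, past the last pole. This is precisely what the paper's Lemma~\ref{lema2.2} ($\gamma_1<\gamma_{D_1}$) secures, using the explicit test function $\phi_1'(r)$ in the Robin form $Q$; your proposal offers no replacement, and indeed flags this step as ``the main technical obstacle'' without resolving it. Until this is supplied the argument is incomplete, and the difficulty you flag is exactly where the paper's work is concentrated.

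Two minor points. With your normalization $w_\omega(1)=-\phi_1'(1)$, the correct symbol is $\phi_1''(1)+\mu(\omega_k)$, not $\phi_1''(1)-\mu(\omega_k)$; the computation in the paper's Proposition~\ref{Pr31} gives $G'(w)=-\phi_1'(1)\partial_r\psi_w(T\,\cdot)+\phi_1''(1)w$, and $\psi_w$ is normalized by $\psi_w=\tilde w$ on the boundary. Also, since $f$ is only $C^{1,\alpha}$, the operator $G$ is only $C^1$ (with the extra cross derivative $D_T D_v G$), so the bifurcating curve is $C^1$, not smooth; Theorem~\ref{Th11} correctly asserts only continuity.
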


\medskip

Let us point out that Assumptions 1 and 2 hold for example in the following cases (among many others):
\begin{itemize}
  \item[(1)] If $f(0)> 0$ and $f'(s)<\lambda_{1}$ for any $s\in(0,+\infty)$, where $\lambda_{1}$ is the first eigenvalue of the Dirichlet Laplacian in the unit ball of $\mathbb{R}^n$. In this case a positive solution can be found (for instance, extending $f(s)=f(0)$ if $s <0$ and minimizing the corresponding Euler-Lagrange functional) and the operator $L_D$ has only positive eigenvalues.
   \item[(2)] If $f(u)=u^{p} - u$, $1<p<\frac{n+2}{n-2}$ if $n > 2$, $p>1$ if $n=2$. In this case the existence of a solution is well known, and it is a mountain-pass solution. As a consequence, $L_D$ has a negative eigenvalue. By the analysis of \cite{kwong}, all other eigenvalues are strictly positive.
   \item[(3)] If $f(u)= \lambda e^u$ and $\lambda \in (0, \lambda^*)$ for some $\lambda^*>0$ that receives the name of extremal value. In this case $\phi_1$ is the so-called minimal solution and $L_D$ has only positive eigenvalues (see for instance \cite{dupaigne}).
\end{itemize}

In particular, (1) holds when $f \equiv 1$, and we recover in this way the result in \cite{FMW17}. On the other hand, when $f(u) = \lambda\, u$ for some $\lambda>0$, Assumption 1 implies that $\lambda$ is the first eigenvalue of the Dirichlet Laplacian in the unit ball of $\mathbb{R}^{n}$, but then Assumption 2 is clearly not satisfied. Hence, our theorem is complementary to the results in \cite{SS12, S10}

\medskip

Theorem \ref{Th11} is a bifurcation result in the spirit of \cite{S10}, see also \cite{SS12, FMW17}. In sum, one can reformulate the existence of solutions to \eqref{eq11} as the zeroes of a nonlinear Dirichlet-to-Neumann operator, and the Crandall-Rabinowitz Theorem is used to conclude local bifurcation. But here the situation is more involved because of the general term $f(u)$. In fact, the operator $L_D$ may have negative eigenvalues, and the bifurcation argument requires a finer spectral analysis. In particular, the Dirichlet-to-Neumann operator can be built only for certain values of $T$, which are related to the nondegeneracy of the Dirichlet problem in the cylinder. We are able to show the existence of a bifurcation branch by taking advantage of the ideas of \cite{RRS20}, where the linearized problem has a negative eigenvalue.

\medskip

The rest of the paper is organized as follows. In Section 2 and Section 3 we give some basic preliminary results on the Dirichlet problem in a ball and in a cylinder respectively. These results will be needed for the construction of the nonlinear Dirichlet-to-Neumann operator, which will be performed in Section 4. In Section 5 we will compute the linearization of this operator. Section 6 is devoted to study the properties of the linearized operator computed in Section 5. With all those ingredients we can use a local bifurcation argument to prove Theorem ~\ref{Th11}; this will be accomplished in Section 7.

%%% ---------------------------------------------------
\numberwithin{equation}{section}\section{Some preliminaries about related linear problems in the ball}
\label{Section 2}
Let $B$ be the unit ball in $\mathbb{R}^n$ centered at the origin. It will be useful to define the following H\"{o}lder spaces:
\[C_{r}^{k,\alpha}(B)=\big\{\phi\in C^{k,\alpha}(B): \phi(x)=\phi(|x|),~x\in B\big\}\,,\]
\[C_{0,r}^{k,\alpha}(B)=\big\{\phi\in C_{0}^{k,\alpha}(B): \phi(x)=\phi(|x|),~x\in B\big\}\,.\]
We also define the following Sobolev spaces:
 \[H_{r}^{1}(B)=\big\{\phi\in H^{1}(B):\phi(x)=\phi(|x|),~x\in B\big\}\,,\]
 \[H_{0,r}^{1}(B)=\big\{\phi\in H_{0}^{1}(B): \phi(x)=\phi(|x|),~x\in B\big\}\,.\]
% \[H_{r}^{1}(C_1)=\big\{\phi\in H^{1}(C_1):\phi(x,t)=\phi(|x|,t),~(x,t)\in C_1\big\},\]
% \[H_{0,r}^{1}(C_1)=\big\{\phi\in H_{0}^{1}(C_1): \phi(x,t)=\phi(|x|,t),~(x,t)\in C_1\big\}.\]
%For some function in such spaces sometimes we will write $\phi(r)$ and $\phi(r,t)$ instead of respectively $\phi(x)$ and $\phi(x,t)$, with $r=|x|$. The reader will understand in each case if we refer to the variable $x$ or $r$.
%\medskip

We will write $r=|x|$, and for functions $\phi$ in such spaces we will use both notations $\phi(x)$ and $\phi(r)$ according to the computations. We recall that $\phi_1$ is a radial solution of \eqref{eq214}, that is,

 \begin{equation}\label{eq369}
\begin{cases}
\phi_{1}''(r)+\frac{n-1}{r}\phi_{1}'(r)+f(\phi_{1}(r))=0 &\mbox{in $(0,1]$ },\\
\phi_{1}(1)=0,\quad\phi_{1}'(0)=0.
\end{cases}
\end{equation}

The operator $L_D$ defined in Assumption 2 has a diverging sequence of eigenvalues $\gamma_{D_j}$, hence there are only a finite number $l$ of them which are negative: $$\gamma_{D_1}<\gamma_{D_2}<\cdots<\gamma_{D_l}<0, \ \gamma_{D_{l+1}}>0.$$
%In another word, $\phi_{1}$ is nondegenerate in $C^{2,\alpha}(C_1)$.

Next result is rather standard, we include it here for the sake of completeness.
\begin{lemma} \label{le300}
The eigenvalues $\gamma_{D_j}$ are all simple.
 \end{lemma}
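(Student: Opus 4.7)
The plan is the standard ODE argument that simplicity of radial eigenvalues on a ball follows from the uniqueness of smooth radial solutions to a linear second-order ODE with a regular singular point at the origin.

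First I would reduce the eigenvalue equation $L_D\phi=\gamma_{D_j}\phi$ on radial functions to the ODE
$$\phi''(r)+\frac{n-1}{r}\phi'(r)+\bigl[f'(\phi_1(r))-\gamma_{D_j}\bigr]\phi(r)=0,\qquad r\in(0,1),$$
with boundary conditions $\phi(1)=0$ and $\phi'(0)=0$ (the latter being forced by radial $C^{2,\alpha}$-regularity, exactly as in \eqref{eq369}). This is a linear second-order ODE on $(0,1)$, so its solution space there is two-dimensional; the goal is to show that the constraints of regularity at $r=0$ and vanishing at $r=1$ together leave at most a one-dimensional space of eigenfunctions.

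Next I would invoke the Frobenius-type analysis at the regular singular point $r=0$: the indicial equation has roots $0$ and $2-n$, and so the space of solutions which are bounded (equivalently, smooth) at the origin is one-dimensional. Concretely, for each $c\in\R$ the initial value problem consisting of the ODE together with $\phi(0)=c$, $\phi'(0)=0$ admits a unique solution $\phi_c\in C^{2,\alpha}([0,1])$, and by linearity $\phi_c=c\,\widetilde\phi$, where $\widetilde\phi$ is the solution with $\widetilde\phi(0)=1$.

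Finally, any radial eigenfunction $\phi\in C^{2,\alpha}_{0,r}(B)$ is smooth at the origin and hence must be of the form $\phi=\phi(0)\,\widetilde\phi$. The Dirichlet condition $\phi(1)=0$ then either forces $\phi(0)=0$ (so $\phi\equiv 0$) or is automatically satisfied by $\widetilde\phi$, in which case every eigenfunction is a scalar multiple of $\widetilde\phi$. Either way the eigenspace attached to $\gamma_{D_j}$ has dimension at most one, so $\gamma_{D_j}$ is simple.

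The only mildly delicate point is justifying the one-dimensionality of smooth radial solutions at the singular point $r=0$; this is classical, but one must handle the $\tfrac{n-1}{r}$ coefficient carefully (especially when $n=1$, where the analysis degenerates to an even extension across $r=0$, and when $n=2$, where a logarithmic singular solution appears). In all cases the conclusion is the same and the proof goes through.
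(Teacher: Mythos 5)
Your proof is correct, but it takes the opposite route from the paper's. Both arguments ultimately rest on uniqueness for a second-order linear ODE, but you anchor the Cauchy problem at $r=0$, which is the singular endpoint, while the paper anchors it at $r=1$, which is regular. Concretely, the paper takes two putative eigenfunctions $\psi_1,\psi_2$ for the same $\gamma_{D_j}$, picks $(k_1,k_2)\neq(0,0)$ with $k_1\psi_1'(1)+k_2\psi_2'(1)=0$, and observes that $\psi=k_1\psi_1+k_2\psi_2$ satisfies both $\psi(1)=0$ and $\psi'(1)=0$; since the coefficient $\tfrac{n-1}{r}$ is smooth near $r=1$, the ordinary Cauchy--Lipschitz uniqueness theorem applies with no further discussion and gives $\psi\equiv 0$, hence linear dependence. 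Your argument instead classifies the solutions that are regular at the singular point $r=0$ via a Frobenius analysis, which is exactly the ``mildly delicate point'' you flag: you need to treat the indicial roots $0$ and $2-n$, the logarithmic branch at $n=2$, and the fact that for $n=1$ the second solution is \emph{not} unbounded at $0$ (both roots are $\geq 0$), so ``bounded $\Leftrightarrow$ smooth $\Leftrightarrow$ one-dimensional'' fails there and one must instead appeal to evenness/$\phi'(0)=0$, as you note. None of this is wrong, and you acknowledge the subtleties, but the paper's choice of endpoint makes all of it unnecessary. In short: same underlying mechanism (two-dimensional solution space plus two scalar constraints from a well-posed Cauchy problem), but anchoring at the regular endpoint $r=1$ yields a two-line proof, whereas anchoring at $r=0$ forces you to do the Frobenius bookkeeping.
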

        \begin{proof} Assume that $\psi_{1},\psi_{2}$ are two nontrivial eigenfunctions corresponding to $\gamma_{D_j}$, i.e.
         \[L_{D}(\psi_{i})+\gamma_{D_j}\psi_{i}=0\,,\,i=1,2\,.\]
         Let us choose $k_{1},k_{2}\in\mathbb{R},(k_{1},k_{2})\neq(0,0),$ such that
         \[k_{1}\psi'_{1}(1)+k_{2}\psi'_{2}(1)=0\,.\]
         If ${\psi}=k_{1}\psi_{1}+k_{2}\psi_{2},$ we have
         \[L_{D}({\psi})+\gamma_{D_j}{\psi}=0, \ \psi(1)=0, \ {\psi}'(1)=0\,.\]
         By the uniqueness of the solution of the Cauchy problem for ODE, ${\psi}\equiv0.$ That is,  $\psi_{1},\psi_{2}$ are linearly dependent. \end{proof}

\medskip

We denote by $z_{j}\in C_{0,r}^{2,\alpha}(B)$ the eigenfunction with eigenvalue  $\gamma_{D_j}$, i.e.
\begin{equation} \label{z}
\begin{cases}
\Delta z_{j}+f'(\phi_{1})z_{j}+\gamma_{D_j}z_{j}=0 &\mbox{in $B$\,, }\\
z_{j}=0 &\mbox{on $\partial B$\,,}\\
\end{cases}
\end{equation}
normalized by $\|z_{j}\|_{L^2}=1$.

\medskip

As is well known, the operator $L_D$ is related to the quadratic form
\[Q_{D}:H_{0,r}^1(B) \rightarrow\mathbb{R}, \ \ Q_{D}(\phi):=\int_{B}\big(|\nabla\phi|^{2}-f'({\phi}_{1})\phi^{2}\big)\,.\]
For instance, the first eigenvalue of $L_{D}$ is given by
\begin{equation} \label{gammaD1}\gamma_{D_1}=\inf\left\{Q_{D}(\phi):\|\phi\|_{L^{2}(B)}=1\right\}.\end{equation}

\medskip

In Section \ref{Section 7} our computations will involve another quadratic form $Q$ defined as \[ Q: H^1_r(B) \to \R, \ \ Q(\psi):=\int_{B}\big(|\nabla\psi|^{2}-f'(\phi_{1})\psi^{2}\big)+c\,\omega_{n}\psi(1)^2\,,\]
where $\omega_n$ is the area of $\mathbb{S}^{n-1}$ and
\begin{equation} \label{defc}  c= -\frac{\phi_1''(1)}{\phi_1'(1)} = n-1 + \frac{f(0)}{\phi_1'(1)}\,.\end{equation}
To get the last equality we used \eqref{eq369}. Observe that
\begin{equation} \label{quadratics} Q|_{H_{0,r}^1(B)}= Q_D\,. \end{equation}
Analogously we can define
\begin{equation} \label{gamma1} \gamma_{1}=\inf\left\{Q(\phi):\|\phi\|_{L^{2}(B)}=1\right\}\,.\end{equation}
It is rather standard to show that $\gamma_1$ is achieved by a solution $\psi_1$ of the problem:
\begin{equation}\label{eqlineal}
\begin{cases}
\Delta{\psi}_1+f'({\phi}_1) \psi_1 + \gamma_1 \psi_1=0 &\mbox{in $B$, }\\
%\tilde{\phi}>0 &\mbox{in $B$ },\\
\partial_{\nu} \psi_1(x) +c \,  \psi_1(x) =0 &\mbox{on $\partial B$\,.}
\end{cases}
\end{equation}
As in Lemma \ref{le300} one can show that $\gamma_1$ is simple, so $\psi_1$ is uniquely determined up to a sign.

\medskip

We finish this section with an estimate of the eigenvalue $\gamma_1$.

\begin{lemma} \label{lema2.2} There holds: $\gamma_1 < \min \{0, \gamma_{D_1} \}$.
\end{lemma}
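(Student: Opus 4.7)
The plan is to establish the two inequalities $\gamma_1 < \gamma_{D_1}$ and $\gamma_1 < 0$ separately.

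The weak inequality $\gamma_1 \leq \gamma_{D_1}$ is immediate from the inclusion $H^1_{0,r}(B) \subset H^1_r(B)$ and \eqref{quadratics}. To obtain strict inequality I would argue by contradiction: if $\gamma_1 = \gamma_{D_1}$, then an $L^2$-normalized minimizer $z_1$ of $Q_D$, regarded as an element of $H^1_r(B)$, would also achieve the infimum in \eqref{gamma1}, and hence would satisfy the natural (Robin) boundary condition of $Q$, namely $\partial_\nu z_1 + c\, z_1 = 0$ on $\partial B$. Since $z_1|_{\partial B} = 0$, this forces $\partial_\nu z_1 = 0$ on $\partial B$ as well; as $z_1$ is radial, uniqueness for the Cauchy problem associated to the ODE in \eqref{z} then yields $z_1 \equiv 0$, contradicting $\|z_1\|_{L^2}=1$.

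For the inequality $\gamma_1 < 0$ the test function I propose is $v(x) := \phi_1'(|x|) \in H^1_r(B)$. Two observations make $v$ natural. First, differentiating \eqref{eq369} with respect to $r$ gives $L_D(v) = (n-1)\,|x|^{-2}\, v$ on $B\setminus\{0\}$. Second, the very definition \eqref{defc} of $c$ is equivalent to $\phi_1''(1) + c\,\phi_1'(1) = 0$, i.e.\ to $v$ satisfying the Robin boundary condition $v'(1)+c\,v(1)=0$. Integrating by parts (legitimate since $v$ is Lipschitz and $v\cdot L_D(v)$ stays bounded near the origin) then gives
\[
Q(v) = -\int_B v\, L_D(v)\,dx + \omega_n\, v(1)\bigl(v'(1) + c\, v(1)\bigr) = -(n-1)\int_B \frac{v^2}{|x|^2}\,dx.
\]
Since $\phi_1'(1) \neq 0$ by Hopf's lemma, $v \not\equiv 0$. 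For every $n \geq 2$ the right-hand side is therefore strictly negative, which gives $\gamma_1 \leq Q(v)/\|v\|_{L^2}^2 < 0$.

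The main obstacle is the case $n=1$, in which the formula above collapses to $Q(v)=0$. In this dimension $v$ is merely Lipschitz at the origin, with a jump in the derivative equal to $2\phi_1''(0) = -2 f(\phi_1(0))$, and $f(\phi_1(0)) > 0$: it cannot vanish (otherwise Cauchy uniqueness would give $\phi_1 \equiv \phi_1(0)$, contradicting $\phi_1(\pm 1) = 0$) and it is nonnegative because $\phi_1$ attains its interior maximum at $0$, so $\phi_1''(0)\leq 0$. Consequently $v$ cannot be a critical point of $Q$: computing the first variation in the direction $\eta \equiv 1$ and using the identity $2c\,\phi_1'(1) = 2 f(0)$ yields
\[
B(v,1) = -\int_{-1}^{1} f'(\phi_1)\,v\,dx + 2c\,\phi_1'(1) = 2\,f(\phi_1(0)) > 0,
\]
so $Q(v-\epsilon) = -2\epsilon\, B(v,1) + \epsilon^2 Q(1) < 0$ for sufficiently small $\epsilon > 0$, and $\gamma_1 < 0$ holds in this remaining case as well.
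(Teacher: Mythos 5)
Your proof is correct, and large parts of it coincide with the paper's argument. The contradiction argument for $\gamma_1 < \gamma_{D_1}$ (equality would force $z_1$ to satisfy both $z_1=0$ and $\partial_\nu z_1=0$ on $\partial B$, hence $z_1 \equiv 0$ by Cauchy uniqueness) is exactly the paper's. For $\gamma_1 < 0$, both you and the paper take the test function $\phi_1'$; your reformulation via $L_D(\phi_1') = (n-1)|x|^{-2}\phi_1'$ and the one-line integration by parts is just a compact rewriting of the same radial computation, yielding the same identity $Q(\phi_1')=-(n-1)\int_B |x|^{-2}(\phi_1')^2$.

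The genuine difference is in the case $n=1$, where this quantity vanishes. The paper observes that $\phi_1'$ (as an odd function on $(-1,1)$) solves the Robin eigenvalue problem \eqref{eqlineal1D} with eigenvalue $0$ and changes sign, hence cannot be the first eigenfunction, so $\gamma_1 < 0$; this appeals to nodal properties of the first Robin eigenfunction and implicitly to the fact that the first eigenvalue over all of $H^1(-1,1)$ coincides with the one over even functions. Your route is variational and more self-contained: you check that $v=\phi_1'(|x|)$ is not a critical point of $Q$ by computing the first variation in the constant direction, $B(v,1)=2f(\phi_1(0))$, and you justify $f(\phi_1(0))>0$ (via the interior maximum at $0$ and Cauchy uniqueness), so that $Q(v-\epsilon)=-2\epsilon B(v,1)+\epsilon^2 Q(1)<0$ for small $\epsilon>0$. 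This avoids citing the sign property of the first eigenfunction, at the price of the small extra lemma $f(\phi_1(0))>0$. Both are valid; yours is arguably the more elementary and airtight treatment of the one-dimensional degenerate case.
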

%linear operator we will need to study in the analysis of our problem. It is $L:C^{2,\alpha}_{r}(B)\rightarrow\mathbb{R}$ defined by
%\[
%L(\psi) = \Delta \psi + f'(\phi_1)\, \psi
%\]
%
% \[Q(\psi,\psi):=\int_{B}\big(|\nabla\psi|^{2}-f'(\phi_{1})\psi^{2}\big)+c\,n\,\omega_{n}\psi(1)^2,\]

\begin{proof}
We first show that $\gamma_1<0$; for this it suffices to find $\psi \in H^1_r(B)$ such that $Q(\psi)<0$. Since $Q$ is considered among radially symmetric functions, we can write the quadratic form as
\begin{align*}
    Q(\psi)&=\int_{B}\big[|\nabla\psi|^{2}-f'(\phi_{1})\psi^{2}\big]+c \, \omega_{n}\psi(1)^{2}\\
    &=\omega_{n}\int_0^1r^{n-1}\big[\psi'(r)^{2}-f'(\phi_{1})\psi(r)^{2}\big]dr+c \, \omega_{n}\psi(1)^{2}.
\end{align*}
Now we compute the derivative in \eqref{eq369} to obtain:
  \begin{equation}\label{eq368}
  \phi_{1}'''(r)+\frac{n-1}{r}\phi_{1}''(r)-\frac{n-1}{r^{2}}\phi_{1}'(r)+f'(\phi_{1})\phi_{1}'(r)=0.
\end{equation}
If we multiply the equation (\ref{eq368}) by $r^{n-1}\phi_{1}'(r)$ and integrate, we obtain
\begin{equation*}
\int_0^1r^{n-1} \left[ \phi_{1}''(r)^{2}-f'(\phi_{1})\phi_{1}'(r)^{2}\right]dr=\phi_{1}'(1)\phi_{1}''(1)-(n-1)\int_0^1r^{n-3}\phi_{1}'(r)^{2}dr.
\end{equation*}
This last equality comes from the computation:
\begin{align*}
&\int_0^1r^{n-1}\phi_{1}'''(r)\phi_{1}'(r)dr=\int_0^1r^{n-1}\phi_{1}'(r)d\phi_{1}''(r)\\
&=r^{n-1}\phi_{1}'(r)\phi_{1}''(r)\big|^{1}_{0}-\int_0^1\phi_{1}''(r)d\big(r^{n-1}\phi_{1}'(r)\big)\\
%&=\phi_{1}''(1)\phi_{1}'(1)-\int_0^1\phi_{1}''(r)[r^{n-1}\phi_{1}''(r)+(n-1)r^{n-2}\phi_{1}'(r)]dr\\
&=\phi_{1}'(1)\phi_{1}''(1)-\int_0^1r^{n-1}\phi_{1}''(r)^{2}dr-(n-1)\int_0^1r^{n-2}\phi_{1}'(r)\phi_{1}''(r)dr.
\end{align*}
We can take the test function $\phi_{1}'(r)\in H_{r}(B)$ obtaining:
  \begin{align*}
  Q(\phi_{1}'(r)) &=\omega_{n}\int_0^1r^{n-1}\big[\phi_{1}''(r)^{2}-f'(\phi_{1})\phi_{1}'(r)^{2}\big]dr+c \, \omega_{n}\phi_{1}'(1)^{2}\\
     &=-(n-1)\omega_{n}\int_0^1r^{n-3}\phi_{1}'(r)^{2}dr.
\end{align*}

If $n>1$, we have already found a radial function $\psi$ such that $Q(\psi)<0$. In the case $n=1$, $Q(\phi_1')=0$, and indeed $\phi_1'$ is a solution of the linearized problem:

\begin{equation}\label{eqlineal1D}
\begin{cases}
{\psi}''+f'({\phi}_1) \psi =0 &\mbox{in $[-1,1]$, }\\
%\tilde{\phi}>0 &\mbox{in $B$ },\\
\psi'(1) +c \,  \psi(1) =0, & -\psi'(-1) +c \,  \psi(-1) =0.
\end{cases}
\end{equation}

However this solution cannot correspond to the first eigenvalue $\gamma_1$ since $\phi_1'$ changes sign in $[-1,1]$. As a consequence, $\gamma_1$ is negative.

\medskip We now show that $\gamma_1 < \gamma_{D1}$.  From \eqref{gammaD1}, \eqref{gamma1} and \eqref{quadratics},
we have immediately $\gamma_1 \leq \gamma_{D_1}$. Assume, reasoning by contradiction, that $\gamma_1 = \gamma_{D_1}$. Hence the minimizer $z_1\in H_{0,r}^1(B)$ works also for the minimizing problem defining $\gamma_1$. In particular, $z_1$ solves \eqref{eqlineal}, and its boundary condition implies that $\partial_\nu z_1(x)=0$ for $x \in \partial B$. Summing up, $z_1$ solves:
$$ \begin{cases}
\Delta{z}_1+f'({\phi}_1) z_1 + \gamma_1 z_1=0 &\mbox{in $B$, }\\
%\tilde{\phi}>0 &\mbox{in $B$ },\\
z_1(x) =0 &\mbox{on $\partial B$,} \\
\partial_{\nu} z_1(x) =0 &\mbox{on $\partial B$.}
\end{cases}$$
But, by the uniqueness of the Cauchy problem for ODE we conclude that $z_1=0$, a contradiction.
\end{proof}

\numberwithin{equation}{section}\section{Eigenvalue estimates for related linear problems in the cylinder  }
\label{Section 3}

As commented in the introduction, the construction of the Neumann-to-Dirichlet operator (which will be made in next section) can be performed only if the Dirichlet problem in the cylinder is not degenerate. The main purpose of this section is to study this question. We will show that we have nondegeneracy for all $T \in (0, \overline{T})$, for some specific value of $\overline{T}$. Hence the rest of the computations of the next sections will always require $T \in (0, \overline{T})$.

Let us consider the Dirichlet problem for the linearized equation in a straight cylinder for periodic functions, namely,
\begin{equation}\label{cilindro}
\begin{cases}
\Delta{\psi}+f'({\phi}_1) \psi= 0 &\mbox{in $B \times \R$, }\\
 \psi(x) =0 &\mbox{on $(\partial B) \times \R$,}
\end{cases}
\end{equation}
where $\psi(x,t)$ is $T$-periodic in the variable $t$. Define:
\[
C^T_1 = B \times \mathbb{R}/T\mathbb{Z}.
\]
Hence \eqref{cilindro} is just the linearization of the problem:
\begin{equation}\label{eq2201}
  \begin{cases}
  \Delta\phi+f(\phi)=0 &\mbox{in }C^T_1,\\
  \phi=0 & \mbox{on } \partial C^T_1.
  \end{cases}
\end{equation}

We define the following H\"{o}lder spaces of radial functions:
 \[C_{r}^{k,\alpha}(C^T_1)=\big\{\phi\in C^{k,\alpha}(C^T_1): \phi(x,t)=\phi(|x|,t),~(x,t)\in C^T_1\big\},\]
 \[C_{0,r}^{k,\alpha}(C^T_1)=\big\{\phi\in C_{0}^{k,\alpha}(C^T_1): \phi(x,t)=\phi(|x|,t),~(x,t)\in C^T_1\big\}.\]
We also define the following Sobolev spaces:
% \[H_{r}^{1}(B)=\big\{\phi\in H^{1}(B):\phi(x)=\phi(|x|),~x\in B\big\},\]
% \[H_{0,r}^{1}(B)=\big\{\phi\in H_{0}^{1}(B): \phi(x)=\phi(|x|),~x\in B\big\},\]
\[H_{r}^{1}(C^T_1)=\big\{\phi\in H^{1}(C^T_1):\phi(x,t)=\phi(|x|,t),~(x,t)\in C^T_1\big\}\,,\]
\[H_{0,r}^{1}(C^T_1)=\big\{\phi\in H_{0}^{1}(C^T_1): \phi(x,t)=\phi(|x|,t),~(x,t)\in C^T_1\big\}\,.\]
For functions in such spaces sometimes we will write $\phi(r)$ and $\phi(r,t)$ instead of respectively $\phi(x)$ and $\phi(x,t)$, with $r=|x|$. The reader will understand in each case if we refer to the variable $x$ or $r$.

\medskip

If ${\phi}_{1}$ is the solution of Problem \eqref{eq214}, then the function $\phi_{1}(x,t)={\phi}_{1}(x)$ (we use a natural abuse of notation) solves \eqref{eq2201}. Define the linearized operator $L_{D}^{T}:C_{0,r}^{2,\alpha}(C^T_1)\rightarrow C_{r}^{\alpha}(C^T_1)$ (associated to Problem \eqref{eq2201}) by
\begin{equation*}
  L_{D}^{T}(\phi)=\Delta \phi+f'(\phi_{1})\phi,
\end{equation*}
and consider the eigenvalue problem
\[ L_{D}^{T}(\phi)+\tau\phi=0.\]
Then the functions $z_{j}(x,t)=z_{j}(x)$ from \eqref{z} solve the problem %$\tau_{1}<\tau_{2}<\cdots<\tau_{j}<0,j=1,2,\cdots.$
\begin{equation}\label{eq202}
  \begin{cases}
  \Delta z_{j}+f'(\phi_{1})z_{j}+\tau_{j}z_{j}=0 &\mbox{in $C^T_1$, }\\
  z_{j}=0 &\mbox{on $\partial C^T_1$.}\\
  \end{cases}
\end{equation}
Let us define the quadratic form $Q_{D}^{T}: H_{0,r}^1(C^T_1) \rightarrow\mathbb{R}$ related to $L_{D}^{T}$,
\begin{equation*}%\label{eq26}
 Q_{D}^{T}(\psi):=\int_{C^T_1}\big(|\nabla \psi|^2-f'(\phi_{1})\psi^2 \big).
\end{equation*}
We will also need to study the quadratic form $Q^{T}: H_{r}^1(C^T_1) \rightarrow\mathbb{R}$,
\begin{equation}\label{eq26bis}
Q^{T}(\psi):=\int_{C^T_1}\big(|\nabla \psi|^2-f'(\phi_{1})\psi^2 \big) + c \int_{\partial C^T_1} \psi^2.
\end{equation}

The main result of this section is next proposition, where we study the behavior of these quadratic forms:

\begin{proposition} \label{Pr300}
Define:
$$
\alpha = \inf \left \{ Q_D^{T} (\psi):\ \psi \in H_{0,r}^1(C^T_1),\ \| \psi \|_{L^2}=1,\ \int_{C^T_1} \psi \, z_j=0,\ j=1, \dots l. \right \}\,,
$$
and
$$
\beta = \inf \left \{ Q^{T} (\psi):\ \psi \in H_r^1(C^T_1),\ \| \psi \|_{L^2}=1,\ \int_{\partial C^T_1} \psi=0,\ \int_{C^T_1} \psi \, z_j=0,\ j=1, \dots l. \right \}\,.
$$
Then
\begin{equation}\label{alpha} \alpha = \min\left\{ \gamma_{D_{l+1}}, \gamma_{D_1} + \frac{4\pi^2}{T^2}\right\},\end{equation}
and
\begin{equation} \label{beta} \beta = \min\left\{ \gamma_{D_{l+1}}, \gamma_{1} + \frac{4\pi^2}{T^2}\right\}\,.\end{equation}
Moreover, those infima are achieved. If $\gamma_{1} + \frac{4\pi^2}{T^2}<  \gamma_{D_{l+1}}$, the minimizer is equal to $$\psi_1(x) \cos\left(\frac{2 \pi}{T} \left(t+\delta\right)\right),$$ where $\psi_1$ is the minimizer for \eqref{gamma1} and $\delta \in [0,1]$.

\end{proposition}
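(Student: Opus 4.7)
The plan is to Fourier-decompose every radial function in the periodic variable $t$. Writing
$$\psi(x,t)=a_0(x)+\sum_{k\ge 1}\bigl[a_k(x)\cos(2\pi k t/T)+b_k(x)\sin(2\pi k t/T)\bigr],$$
with $a_k,b_k\in H^1_r(B)$, Parseval's identity turns the quadratic form $Q_D^T$ into an uncoupled sum over Fourier modes,
$$Q_D^T(\psi)=T\,Q_D(a_0)+\frac{T}{2}\sum_{k\ge 1}\!\left[Q_D(a_k)+Q_D(b_k)+\frac{4\pi^2 k^2}{T^2}\bigl(\|a_k\|_{L^2(B)}^2+\|b_k\|_{L^2(B)}^2\bigr)\right]\!,$$
with an analogous decomposition for $Q^T$ in which the boundary term $c\int_{\partial C^T_1}\psi^2$ also splits mode-by-mode into the corresponding $Q$-type quantities evaluated on each coefficient. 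Since $z_j$ is $t$-independent and $\int_0^T\cos(2\pi k t/T)\,dt=\int_0^T\sin(2\pi k t/T)\,dt=0$ for $k\ge 1$, the constraint $\int_{C^T_1}\psi\,z_j=0$ reduces to $\int_B a_0\,z_j=0$, and $\int_{\partial C^T_1}\psi=0$ reduces to $a_0(1)=0$. The $L^2$ normalization likewise splits into a weighted sum of $\|a_k\|_{L^2(B)}^2$ and $\|b_k\|_{L^2(B)}^2$.

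Having decoupled everything, the two infima become independent minimizations on each mode. For $\alpha$, the Dirichlet condition forces $a_k,b_k\in H^1_{0,r}(B)$ for every $k$. The zeroth mode, restricted to the orthogonal complement of $z_1,\dots,z_l$, contributes at least $\gamma_{D_{l+1}}\|a_0\|_{L^2}^2$ by the variational characterization of the $(l{+}1)$-th eigenvalue of $L_D$; for $k\ge 1$, \eqref{gammaD1} gives
$$Q_D(a_k)+\frac{4\pi^2 k^2}{T^2}\|a_k\|_{L^2}^2\ge\bigl(\gamma_{D_1}+4\pi^2 k^2/T^2\bigr)\|a_k\|_{L^2}^2,$$
and analogously for $b_k$. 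Summing the pieces and dividing by the total $L^2$-mass shows that the Rayleigh quotient is at least $\min\{\gamma_{D_{l+1}},\gamma_{D_1}+4\pi^2/T^2\}$, which proves \eqref{alpha}.

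For $\beta$ the zeroth-mode analysis is unchanged: the boundary constraint forces $a_0\in H^1_{0,r}(B)$, on which $Q$ coincides with $Q_D$ by \eqref{quadratics}, again producing the lower bound $\gamma_{D_{l+1}}\|a_0\|_{L^2}^2$. For $k\ge 1$ there is no constraint on $a_k,b_k\in H^1_r(B)$, and \eqref{gamma1} directly gives $Q(a_k)+\frac{4\pi^2 k^2}{T^2}\|a_k\|_{L^2}^2\ge(\gamma_1+4\pi^2 k^2/T^2)\|a_k\|_{L^2}^2$, minimized at $k=1$. This yields \eqref{beta}. Both infima are in fact attained, since each lower bound is realized by a concrete eigenfunction: $\gamma_{D_{l+1}}$ by the $t$-independent Dirichlet eigenfunction $z_{l+1}$, and $\gamma_1+4\pi^2/T^2$ by $\psi_1(x)\bigl[A\cos(2\pi t/T)+B\sin(2\pi t/T)\bigr]$. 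Rewriting the trigonometric polynomial in amplitude-phase form as $R\cos(2\pi(t+\delta)/T)$ and normalizing gives the minimizer stated in the proposition. The constraints $\int_{\partial C^T_1}\psi=0$ and $\int_{C^T_1}\psi\,z_j=0$ are automatically satisfied on these $k=1$ candidates, as $\cos$ and $\sin$ have zero mean over a period.

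The main obstacle is essentially bookkeeping: carrying the correct combinatorial factors ($T$, $T/2$, $\omega_n$) through the Parseval identities — in particular making sure the boundary term in $Q^T$ splits as claimed via the trace of each Fourier coefficient — and verifying that all affine constraints truly decouple onto the zeroth mode alone. Once this is in place, each per-mode minimization is a standard Rayleigh-quotient argument based on the already-established variational formulas for $\gamma_{D_j}$ and $\gamma_1$.
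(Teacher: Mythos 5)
Your proof is correct and reaches the stated conclusions, but it is organized quite differently from the paper's. You Fourier-decompose $\psi$ in $t$ from the start, observe that $Q_D^T$ (resp.\ $Q^T$) and the $L^2$ constraint split mode-by-mode with the $z_j$-orthogonality and boundary-mean constraints landing only on the zeroth mode $a_0$, and then bound each per-mode Rayleigh quotient by $\gamma_{D_{l+1}}$ (for $k=0$) or $\gamma_{D_1}+4\pi^2k^2/T^2$, $\gamma_1+4\pi^2k^2/T^2$ (for $k\ge1$). Attainment is read off by exhibiting the minimizers $z_{l+1}(x)$ and $\psi_1(x)\cos(2\pi t/T)$. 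The paper instead first asserts (``rather standard'' compactness) that the infimum $\beta$ is attained, derives the Euler--Lagrange equation via Lagrange multipliers, passes to the $t$-average $\bar\psi(x)=\int_0^T\psi(x,t)\,dt$, and splits into cases $\bar\psi\neq0$ (forcing $\beta=\gamma_{D_{l+1}}$) and $\bar\psi=0$ (where the Poincar\'e--Wirtinger inequality gives $\beta\ge\gamma_1+4\pi^2/T^2$). The two arguments are morally the same --- $\bar\psi$ is your zeroth mode and Poincar\'e--Wirtinger is the $k\ge1$ bound --- but yours has the merit of being fully explicit and self-contained: you do not need to invoke the (unproved) attainment of the infimum since the lower bound and the explicit test functions together establish it, whereas the paper leans on that existence step. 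Your version does require checking that the Fourier coefficients of an $H^1_r(C^T_1)$ function lie in $H^1_r(B)$ (resp.\ $H^1_{0,r}(B)$) and that the boundary trace commutes with the Fourier splitting, which you flag as ``bookkeeping''; those checks are routine but should be stated if this were written out in full.
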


\begin{proof} We prove the result for $\beta$; the result for $\alpha$ is analogous. First, it is rather standard to show that $\beta$ is achieved by a function $\psi$. By the Lagrange multiplier rule, there exist $\theta_{1},\theta_{2}$ and $\zeta_1, \dots \zeta_l$ real numbers so that for any $\rho\in H_{r}^{1}(C^T_1),$
	\[\int_{C^T_1}\big(\nabla\psi\nabla\rho-f'(\phi_{1})\psi\rho + \rho \sum_{i=1}^{l} \zeta_i z_i  + \theta_1 \psi \rho \big ) =\int_{\partial C^T_1}\rho(\theta_{2}+c \psi).\]	
By choosing $\rho = z_j $ we conclude that $\zeta_j=0$. If we now take $\rho= \psi$, we obtain that $\theta_1 = \beta$. Hence $\psi$ is a solution of the equation
\begin{equation*}%\label{otraeq}
\Delta \psi +f'(\phi_{1}) \psi + \beta \psi =0  \ \mbox{\ in }C^T_1.
\end{equation*}

Define now:
\[
\bar{\psi}(x)=\int_0^T\psi(x,t)dt\,.
\]
It is immediate that
\begin{equation} \label{ortho} \int_{B} \bar{\psi} \, z_j = \int_{C^T_1} \psi \,  z_j =0, \ j=1, \dots ,l\,.\end{equation}
By direct computation
  \[\begin{aligned}
    \Delta_x\bar{\psi}&=\int_0^T\Delta_{x}\psi(x,t)dt\\&=\int_0^T\Delta\psi(x,t)dt-\int_0^T\psi_{tt}(x,t)dt\\
    &=\int_0^T\Delta\psi(x,t)dt-(\psi_{t}(x,T)-\psi_{t}(x,0))\\
     &=\int_0^T\Delta\psi(x,t)dt\\&=-\int_0^T (f'(\phi_{1}) + \beta )\psi(x,t)dt\\&=-(f'(\phi_{1}) + \beta) \bar{\psi}.
         \end{aligned}\]
 As a consequence, we have that $\bar{\psi}$ solves the problem
 \begin{equation*}
  \begin{cases}
  \Delta\bar{\psi}+f'(\phi_{1})\bar{\psi}+ \beta \bar{\psi} =0 &\mbox{in $B$,}\\
  \bar{\psi}=0 &\mbox{on $\partial B$.}\\
  \end{cases}
\end{equation*}
Taking into account \eqref{ortho}, there are two cases: either $\beta = \gamma_{D_k}$, $k \geq l+1$, or $\bar{\psi}=0$. In the first case, by plugging $z_{l+1}$ in the definition of $\beta$, we conclude that $k= l +1$.
In the second case we have,
\[
\int_0^T\psi(x,t)dt=0\,  \,\, \forall x \in B.
\]
Hence we can use the Poincar\'{e}-Wirtinger inequality for periodic functions to estimate:
\[ \frac{4 \pi^2}{T^2} \int_0^T\psi^{2}dt \leq \int_0^T\psi_{t}^{2}dt \, .\]
Then, recalling \eqref{gammaD1},
\[\begin{aligned}
    \beta= Q^{T}(\psi)
    &=\int_0^T \left( \int_{B} \big(|\nabla_{x}\psi|^{2}-f'(\phi_{1})\psi^{2}\big) + c \omega_n \psi(1,t)^2 \right) dt +\int_{B} \int_0^T |\psi_{t}|^{2}dt\\
    & \geq \gamma_{1} \int_0^T dt\int_{B} \psi^2 +\frac{4 \pi^2}{T^2}  \int_{B} \int_0^T {{\psi}}^{2}dt\\
   &=\left(\gamma_{1}+\frac{4 \pi^2}{T^2}\right)\int_{C^T_1}\psi^{2} = \gamma_{1}+ \frac{4 \pi^2}{T^2}\,.
         \end{aligned}\]
Moreover, the above inequalities are equalities only if $\psi(x,t)$ is proportional to $\psi_1(x)\cos\left(\frac{2 \pi}{T} (t+\delta)\right)$.
%
%Then we can get that
%\[\gamma_1^\lambda=\inf\Bigg\{Q^{\lambda}(\psi,\psi):\int_{\partial C_1}\psi=0, ~~\int_{C_1}\psi^{2}=1\Bigg\}=\gamma_{1}+\lambda\pi^{2}.\]
%Taking $\psi(x,t)=\phi(x)v(t)$ with $x\in B,t\in (0,1)$, where $\phi(x)$ solves the problem
%\[L_{D}(\phi)=0,~~\phi=0 ~~\mbox{on $\partial B$}.\]
%By the Poincar\'{e}-Wirtinger inequality, observe now that
%\[\begin{aligned}
 %   Q_{D}^{\lambda}(\psi,\psi)
  %  &=\int_{C_1}|\nabla_{x}\phi|^{2}v^2+\lambda\phi^{2}|v_{t}|^{2}-f'(\phi_{1})\phi^{2}v^2\\
   % &=\int_{B}Q_{D}(\phi,\phi)\int_0^1v^2+\lambda \int_{B}\phi^{2}\int_0^1v_{t}^{2}\\
   %&\geq\gamma_{D1}\int_{B}\phi^{2}\int_0^1v^2+\lambda\pi^{2}\int_{B}\phi^{2}\int_0^1v^{2}\\
   %&=(\gamma_{D1}+\lambda\pi^{2})\int_{C_1}\psi^{2}
    %     \end{aligned}\]
%
\end{proof}

\medskip

As a consequence, we can state the following:

\begin{corollary} \label{cor}
	Define $\overline{T}$ as:
	\begin{equation}\label{eq266}
	\overline{T}= \left \{\begin{array}{ll}  \frac{2 \pi}{\sqrt{-\gamma_{D_1}}} & \mbox{ if } \gamma_{D_1}<0\,, \\ +\infty  & \mbox{ if } \gamma_{D_1}>0\,. \end{array} \right.
	\end{equation}
	Then, for any $T \in (0, \overline{T})$, we have that $Q_{D}^{T}(\psi)>0$ for any $\psi\in H_{0,r}^1(C^T_1)%\cap H^{1}_{0,r}(C_1)
	$ satisfying the orthogonality conditions:
	\[
	\int_{C^T_1}\psi z_{j}=0\,, \,j=1,2,\cdots,l\,.
	\]
	As a consequence, $L_D^{T}$ is nondegenerate for any $T \in (0,\overline{T})$.
\end{corollary}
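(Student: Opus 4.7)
The corollary is essentially a direct consequence of Proposition~\ref{Pr300} combined with Assumption~2. The plan is first to deduce the positivity of $Q_D^T$ on the orthogonal complement of $\mathrm{span}\{z_1,\dots,z_l\}$ from the explicit value of $\alpha$ given in \eqref{alpha}, and then to obtain the nondegeneracy of $L_D^T$ by a standard spectral decomposition argument.

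For the positivity, I would verify directly that
\[
\alpha=\min\Big\{\gamma_{D_{l+1}},\ \gamma_{D_1}+\tfrac{4\pi^2}{T^2}\Big\}>0
\]
for every $T\in(0,\overline{T})$. By the very definition of $l$ one has $\gamma_{D_{l+1}}>0$. If $\gamma_{D_1}>0$ then necessarily $l=0$, $\overline{T}=+\infty$ and the second term is positive for every $T>0$. If instead $\gamma_{D_1}<0$, then the condition $T<\overline{T}=2\pi/\sqrt{-\gamma_{D_1}}$ is exactly equivalent to $\gamma_{D_1}+\tfrac{4\pi^2}{T^2}>0$. In either case $\alpha>0$, and Proposition~\ref{Pr300} gives $Q_D^T(\psi)\geq \alpha\,\|\psi\|_{L^2}^2>0$ for every nonzero $\psi\in H^1_{0,r}(C^T_1)$ satisfying the listed orthogonality conditions.

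For the nondegeneracy claim I would take any radial $\psi\in C^{2,\alpha}_{0,r}(C^T_1)$ with $L_D^T(\psi)=0$ and decompose it as
\[
\psi=\sum_{j=1}^{l}a_j\,z_j + \psi_\perp,\qquad a_j=\int_{C^T_1}\psi\,z_j,
\]
where $\psi_\perp$ is $L^2$-orthogonal to $z_1,\dots,z_l$. The $z_j$'s are pairwise orthogonal in $L^2$ because $L_D$ is symmetric and the eigenvalues $\gamma_{D_j}$ are simple by Lemma~\ref{le300}. Testing $L_D^T(\psi)=0$ against $z_j$ and integrating by parts (both $\psi$ and $z_j$ vanish on $\partial C^T_1$, and the integration in $t$ is over a period) gives $-\gamma_{D_j}\,a_j=0$; since Assumption~2 forces $\gamma_{D_j}\neq 0$, each $a_j$ vanishes, so $\psi=\psi_\perp$ meets the orthogonality conditions. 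Multiplying $L_D^T(\psi)=0$ by $\psi$ and integrating by parts then yields $Q_D^T(\psi)=0$, and the positivity proved above forces $\psi\equiv0$.

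Given that all the spectral work has already been packaged into Proposition~\ref{Pr300}, I do not anticipate any serious obstacle; the only slightly delicate point is keeping track of the two possibilities for the sign of $\gamma_{D_1}$ (together with the corresponding value of $\overline{T}$) when verifying that $\alpha>0$.
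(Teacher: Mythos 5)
Your proof is correct and matches what the paper intends: the paper simply states Corollary~\ref{cor} as an immediate consequence of Proposition~\ref{Pr300}, and your write-up supplies exactly the verification that $\alpha>0$ on $(0,\overline{T})$ (splitting on the sign of $\gamma_{D_1}$, which by Assumption~2 is nonzero) together with the standard argument that testing $L_D^T\psi=0$ against each $z_j$ forces the orthogonality conditions and then positivity of $Q_D^T$ forces $\psi\equiv0$. This is the intended argument; no gap.
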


\medskip

%\sup\Bigg\{\lambda>0:Q_{D}^{\lambda}(\psi,\psi)\leq 0~~ \mbox{for some}~~ \psi\in H^{1}_{0}(C_1)\setminus\{0\},\int_{C_1}\psi z_{1}=0.\Bigg\}\]

%-----------------------------------------------------------------------------------
\numberwithin{equation}{section}\section{Perturbations of the cylinder and formulation of the problem}
\label{Section 4}

 %The cylinder domain
%\[C_{1}=\big\{(x,t)\in\mathbb{R}^{n}\times\mathbb{R}: 0\leq|x|<1\big\}.\]
%For all $T>0,$ the cylinder domain
%\[C_{1}^{T}=\big\{(x,t)\in\mathbb{R}^{n}\times\mathbb{R}/T\mathbb{Z}: 0\leq|x|<1\big\}.\]

The main purpose of this section is to build a nonlinear Dirichlet-to-Neumann operator $G$ associated to \eqref{eq11} for any $T \in (0, \overline{T})$.

Given a positive number $T$ and a $C^{2,\alpha}$ function $v:\mathbb{R}/\mathbb{Z}\rightarrow \mathbb{R}$ (i.e. periodic of period $1$) with small $C^{2,\alpha}$-norm, we define:
\[C^T_{1+v}=\left\{(x,t)\in\mathbb{R}^{n}\times\mathbb{R}/\mathbb{Z}\, \, :\, \, 0\leq |x|<1+ v\left(\frac{t}{T} \right)\right\}.\]
%Notice that we write it as a domain of $\mathbb{R}^{n}\times\mathbb{R}/\mathbb{Z}$ to underline that in this case it is a compact domain.
Such a domain is in fact a small perturbation of the straight cylinder of radius 1, periodic in the vertical direction with period $T$. We look at the problem:
\begin{equation}\label{eq22001}
  \begin{cases}
  \Delta u +f(u)=0 &\mbox{in $C^T_{1+v}$, }\\
  u>0 &\mbox{in $C^T_{1+v}$, }\\
  u=0 &\mbox{on $\partial C^T_{1+v}$,}\\
    \partial_{\nu} u=\mbox{constant} &\mbox{on $\partial C^T_{1+v}$. }
  \end{cases}
\end{equation}
Our aim will be to find a curve $(v,T) =(v(T),T)$, with $v\not\equiv0$, such that problem \eqref{eq22001} has a solution. We shall write it in the equivalent form:

\begin{equation}\label{eq22001 bis}
\begin{cases}
\Delta_{\lambda}\phi+f(\phi)=0 &\mbox{in $C_{1+v}^1$ }\\
\phi>0 &\mbox{in $C_{1+v}^1$ }\\
\phi=0 &\mbox{on $\partial C_{1+v}^1$}\\
|\nabla^{\lambda} \phi|=\mbox{constant} &\mbox{on $\partial C_{1+v}^1$ }
\end{cases}\,,
\end{equation}
where $\Delta_{\lambda} \phi = \Delta_x \phi + \lambda \phi_{tt}$, and $\nabla^{\lambda} \phi= (\nabla_x \phi, \sqrt{\lambda} \phi_t)$.  Indeed, if we set $T = \frac{1}{\sqrt{\lambda}}$, we have that
\begin{equation} \label{formula}
u(x,t) = \phi \left( x, \frac{t}{T} \right)
\end{equation}
is a solution of \eqref{eq22001}.

Since it is clear that (\ref{eq11}) is invariant under translations, it is natural to require that the function $v$ is even. Moreover, sometimes it will be useful to assume the function $v$ has 0 mean. So, we introduce the H\"{o}lder spaces:

\[C_{e}^{k,\alpha}(\mathbb{R}/\mathbb{Z})=\left\{v\in C^{k,\alpha}(\mathbb{R}/\mathbb{Z}): v(-t)=v(t)\right\},\]

\[C_{e,m}^{k,\alpha}(\mathbb{R}/\mathbb{Z})=\left\{v\in C^{k,\alpha}(\mathbb{R}/\mathbb{Z}): v(-t)=v(t),~\int_0^1v \,dt=0\right\}\]
for $k \in \mathbb{N}$.

\medskip

We start with the following result:

\begin{proposition} \label{Pr30}
	Assume that $\lambda>\frac{1}{\overline{T}^2},$ where $\overline{T}$ is given by (\ref{eq266}). Then, for all $v\in C_{e}^{2,\alpha}(\mathbb{R}/\mathbb{Z})$ whose norm is sufficiently small, the problem
	\begin{equation}\label{eq27}
	\begin{cases}
	\Delta_{\lambda} \phi+f(\phi)=0 &\emph{in $C_{1+v}^1$ }\\
	\phi=0 &\emph{on $\partial C^1_{1+v}$}
	\end{cases}
	\end{equation}
	has a unique positive solution $\phi=\phi_{v,\lambda}\in C^{2,\alpha}(C^{1}_{1+v}).$ %which is normalized by
	%\begin{equation}\label{eq239}
	%\int_{C_{1+v}^{T}}\phi_{1}^{2}=1.
	%\end{equation}
	Moreover, $\phi$ depends smoothly on the function $v,$ and $\phi=\phi_{1}$ when $v\equiv0.$
\end{proposition}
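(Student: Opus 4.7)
The plan is to apply the implicit function theorem after pulling back the varying-domain problem \eqref{eq27} to the fixed cylinder $C^1_1$. I would introduce the diffeomorphism $\Phi_v:\overline{C^1_1}\to\overline{C^1_{1+v}}$ defined by $\Phi_v(x,t)=((1+v(t))\,x,\,t)$, which is indeed a $C^{2,\alpha}$-diffeomorphism as long as $v$ lies in a sufficiently small ball around $0$ in $C^{2,\alpha}_e(\mathbb{R}/\mathbb{Z})$. If $\phi\in C^{2,\alpha}(\overline{C^1_{1+v}})$ solves \eqref{eq27}, then the pullback $\psi:=\phi\circ\Phi_v\in C^{2,\alpha}(\overline{C^1_1})$ satisfies
\[
A_v(\psi)+f(\psi)=0\ \text{in}\ C^1_1,\qquad \psi=0\ \text{on}\ \partial C^1_1,
\]
where $A_v$ is a linear second-order elliptic operator whose coefficients are rational expressions in $v,v',v''$ that depend smoothly on $v$ in $C^{2,\alpha}_e(\mathbb{R}/\mathbb{Z})$, with $A_0=\Delta_\lambda$. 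Because $\Phi_v$ commutes with rotations in the $x$-variable, $\psi$ inherits the radial ansatz $\psi(x,t)=\psi(|x|,t)$.

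Writing $w:=\psi-\phi_1$, I would then define
\[
F:U\times V\longrightarrow C^{0,\alpha}_r(C^1_1),\qquad F(v,w):=A_v(\phi_1+w)+f(\phi_1+w),
\]
where $U$ is a small neighborhood of $0$ in $C^{2,\alpha}_e(\mathbb{R}/\mathbb{Z})$ and $V$ a small neighborhood of $0$ in $C^{2,\alpha}_{0,r}(C^1_1)$. Since $\phi_1$ is $t$-independent and solves \eqref{eq214}, we have $F(0,0)=0$. The map $F$ is $C^1$: the dependence on $v$ is smooth because $1+v>0$ and the pullback coefficients are rational in $v,v',v''$, while $w\mapsto f(\phi_1+w)$ is of class $C^1$ into $C^{0,\alpha}_r$ thanks to $f\in C^{1,\alpha}$. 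A direct calculation yields
\[
D_w F(0,0)\,h\,=\,\Delta_\lambda h+f'(\phi_1)\,h\,=\,L^T_D(h),\qquad T:=1/\sqrt{\lambda}.
\]

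The assumption $\lambda>1/\overline{T}^{\,2}$ translates into $T\in(0,\overline{T})$, so by Corollary \ref{cor} zero is not an eigenvalue of $L^T_D$ on the radial subspace. Since $L^T_D$ differs from the Dirichlet Laplacian on the cylinder only by the zero-order multiplication operator $f'(\phi_1)$, it is Fredholm of index zero between $C^{2,\alpha}_{0,r}(C^1_1)$ and $C^{0,\alpha}_r(C^1_1)$ by standard Schauder theory; nondegeneracy then promotes it to an isomorphism. The implicit function theorem supplies a unique smooth curve $v\mapsto w(v)$ with $w(0)=0$ and $F(v,w(v))=0$, and $\phi_{v,\lambda}:=(\phi_1+w(v))\circ\Phi_v^{-1}$ is the sought solution, coinciding with $\phi_1$ when $v\equiv 0$. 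Positivity of $\phi_{v,\lambda}$ follows by $C^{2,\alpha}$-continuity from $\phi_1>0$ in $B$, using $\partial_\nu\phi_1\neq 0$ (Assumption 1) to rule out sign changes in a uniform tubular neighborhood of $\partial B$; local uniqueness is already built into the IFT conclusion.

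The only substantive point in the argument is the isomorphism property of $D_wF(0,0)=L^T_D$, which is precisely what the spectral estimates of Proposition \ref{Pr300} and Corollary \ref{cor} were designed to furnish. The remaining steps—writing out $A_v$, checking smoothness of the coefficients in $v$, and running the IFT—are essentially routine bookkeeping on the change of variables.
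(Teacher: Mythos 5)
Your proposal is correct and follows essentially the same route as the paper's proof: you pull back via the same map $\Phi_v(x,t)=((1+v(t))x,t)$ to the fixed cylinder, apply the implicit function theorem to the map $F(v,w)=A_v(\phi_1+w)+f(\phi_1+w)$ (the paper calls it $N(v,\psi)$ and writes the operator as $\Delta_{\lambda,g}$), and invoke Corollary~\ref{cor} to make the linearization $\Delta_\lambda+f'(\phi_1)$ an isomorphism. The only slight additions are your brief remark on how positivity propagates from $\phi_1$ via $\partial_\nu\phi_1\neq 0$, which the paper leaves implicit.
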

\begin{proof}  Let $v\in C_{e}^{2,\alpha}(\mathbb{R}/\mathbb{Z})$. It will be more convenient to consider the fixed domain $C^{1}_1$ endowed with a new metric depending on $v$. This will be possible by considering the parameterization of $C_{1+v}^{1}$ defined by
	\begin{equation}\label{eq205}
	Y(y,t):=\Big(\big(1+v(t)\big)y,t\Big).
	\end{equation}
	%where $\chi$ is a cutoff function
	%\begin{equation*}
	% \chi(y)=
	%\begin{cases}
	% 0, &|y| \leq\frac{1}{2},\\
	%1, &|y| \geq\frac{3}{4}.
	
	%\end{cases}
	%\end{equation*}
	
	Therefore, we consider the coordinates $(y,t)\in C^{1}_1$ from now on, and we can write the new metric in these coordinates as
	\[g=\sum\limits_{i}[1+v(t)]^{2}dy_{i}^{2}+\sum\limits_{i}[1+v(t)]v'(t)y_{i}dy_{i}dt+[v'(t)^{2}y^2+1]dt^{2}.\]
	%\[g=\sum\limits_{i,j}g_{ij}dy_{i}dy_{j}+\sum\limits_{i}g_{it}dy_{i}dt+g_{tt}dtdt,\]
	%where the coefficients $g_{ij}=0$ if $i\neq j$ and $g_{ii}=[1+v(t)]^{2}$, $g_{it}=[1+v(t)]v'(t)y_{i},g_{tt}=v'(t)^{2}y^2+1$ are the function of $(y,t)$ depending on $v$ and its first partial derivatives.
	
	%Observe that the map $v\mapsto C^{ij}(v)$ is smooth.
	Up to some multiplicative constant, we can now write the problem (\ref{eq27}) as
	\begin{equation}\label{eq207}
	\begin{cases}
	\Delta_{\lambda,g} \hat{\phi}+f(\hat{\phi})=0 &\mbox{in $C^{1}_1$ }\\
	\hat{\phi}=0 &\mbox{on $\partial C^{1}_1$}
	\end{cases}
	\end{equation}
	where $\Delta_{\lambda,g}$ is the operator $\Delta_{\lambda}$ rewritten in the metric $g$.
	As $v\equiv0$, the metric $g$ is just the Euclidean metric, and $\hat{\phi}=\phi_{1}$ is therefore a solution of (\ref{eq207}).  In the general case, the expression between the function $\phi$ and the function $\hat{\phi}$ can be represented by
	\[\hat{\phi}=Y^{*}\phi.\]
	For all $\psi\in C_{0,r}^{2,\alpha}(C^{1}_1),$ we define:
	\begin{equation} \label{defN} N(v,\psi):=\Delta_{\lambda,g}(\phi_{1}+\psi)+f(\phi_{1}+\psi).\end{equation}
	We have\[N(0,0)=0.\]
	The mapping $N$ is $C^1$ from a neighborhood of $(0,0)$ in $C_{e}^{2,\alpha}(\mathbb{R}/\mathbb{Z})\times C_{0,r}^{2,\alpha}(C^{1}_1)$ into $C_{r}^{\alpha}(C^{1}_1).$ We point out  that $N$ could fail to be $C^2$ with respect to $v$, since the nonlinearity $f$ is assumed only to be $C^{1,\alpha}$, but in any case it admits the double cross derivative $D_\lambda D_v$. The partial differential of $N$ with respect to $\psi$ at $(0,0)$ is
	\[D_{\psi}N|_{(0,0)}(\psi)=\Delta_{\lambda}\psi+f'(\phi_{1})\psi.\]
	Via the change of variables $w(x,t) = \psi \left( x, \frac{t}{T} \right),$ we can use Corollary \ref{cor} to show that $D_{\psi}N|_{(0,0)}$ is invertible from $C_{0,r}^{2,\alpha}(C^{1}_1)$ into $C_{r}^{\alpha}(C^{1}_1)$. The Implicit Function Theorem therefore yields that there exists $\psi(v,\lambda)\in C_{0,r}^{2,\alpha}(C^{1}_1)$ such that $N(v,\psi(v,\lambda))=0$ for $v$ in a neighborhood of $0$ in $C_{e}^{2,\alpha}(\mathbb{R}/\mathbb{Z})$. The function $\hat{\phi}:=\phi_{1}+\psi$ solves (\ref{eq207}), and moreover the dependence on $\lambda $ is $C^1$.

\end{proof}

\medskip

For any $\lambda > \frac{1}{\overline{T}^2}$ we define the nonlinear operator $G$ as follows. After the canonical identification of $\partial C^{1}_{1+v}$ with $\mathbb{S}^{n-1}\times \mathbb{R}/\mathbb{Z},$ we define the following  operator $G: \mathcal{U}\times(\frac{1}{\overline{T}^2},+\infty) \rightarrow C^{1,\alpha}_{e,m}(\R / \Z)$, where $\mathcal{U}$ is a neighborhood of $0$ in $C^{1,\alpha}_{e,m}(\R / \Z)$, as:
\begin{equation}\label{eq24bis}
G(v,\lambda)(t)= -|\nabla^{\lambda}\phi_{v,\lambda}|_{\partial C^1_{1+v}}+\frac{1}{\mbox{Vol}(\partial C^1_{1+v})}\int_{\partial C^{1}_{1+v}}|\nabla^{\lambda}\phi_{v,\lambda}|,
\end{equation}
where $\phi_{v,\lambda}$ is the solution of (\ref{eq27}) verified by Proposition \ref{Pr30}. Clearly $G$ is a $C^1$ operator, and admits also the crossed derivative $D_\lambda D_v G$ since the operator $N$ defined in \eqref{defN} does.

Clearly, $G$ admits the equivalent expression as the Dirichlet-to-Neumann operator:

\begin{equation}\label{eq24}
G(v,T)(t) =\left.  \partial_{\nu} (u_{v,T})\right|_{\partial C^T_{1+v}} (T\, t) -\frac{1}{\mbox{Vol}(\partial C^T_{1+v})}\int_{\partial C^T_{1+v}} \partial_{\nu}(u_{v,T}),
\end{equation}
where $u_{v,T}$ is related to $\phi_{v,\lambda}$ via the formula \eqref{formula}.
 Notice that $G(v,T)=0$ if and only if $\partial_{\nu} u$ is constant on the boundary $\partial C^T_{1+v}.$ Obviously, $G(0,T)=0$ for all $T<\overline{T}.$ Our goal is to find a branch of nontrivial solutions $(v,T)$ to the equation $G(v,T)=0$ bifurcating from some point $(0, T_{*})$, $T_{*} \in (0, \overline{T})$. To this aim, we will use a local bifurcation argument. This leads to the study of the linearization of $G$ around a point $(0,T)$; this study is the purpose of the next section.

\numberwithin{equation}{section}\section{The linearization of the operator $G$}
\label{Section 5}

We will next compute the Fr\'{e}chet derivative of the operator $G$. For that aim, we will need the following two lemmas.
%We start with a straightforward computing.
   %\begin{lemma} \label{le31}
%Assume that $v\in C_{even,0}^{2,\alpha}(\mathbb{R}/T\mathbb{Z}).$ For $T>0$, we define
%\[\phi_{0}(x,t)=\partial_{r}\phi_{1}(x)v(t),\]
%where $r=|x|.$ Then
%\begin{equation*}
  %\Delta\phi_{0}+f'(\phi_{1})\phi_{0}=\frac{1}{r^{2}}\partial_{r}\phi_{1}(r^{2}d^{2}t+n-1)v.
%\end{equation*}
 %\end{lemma}

%\textbf{Proof}. Using the fact that
%\[\Delta\partial_{r}\phi_{1}=-f'(\phi_{1})\partial_{r}\phi_{1}+\frac{n-1}{r^{2}}\partial_{r}\phi_{1},\]
%we obtain
%\[\begin{aligned}
    %\Delta\phi_{0}&=v\Delta\partial_{r}\phi_{1}+\partial_{r}\phi_{1}\Delta v+2\nabla v\nabla\partial_{r}\phi_{1}\\&=-f'(\phi_{1})\phi_{0}+\frac{1}{r^{2}}\partial_{r}\phi_{1}(r^{2}d^{2}t+n-1)v.
     %    \end{aligned}\]
%The proof is done.
%\hfill$\Box$

 \begin{lemma} \label{le32}
Assume that $T<\overline{T},$ where $\overline{T}$ is given by (\ref{eq266}). Then for all $v\in C_{e}^{2,\alpha}(\mathbb{R}/\mathbb{Z})$, there exists a unique solution $\psi_{v,T}$ to the problem
\begin{equation}\label{eq23}
  \begin{cases}
  \Delta\psi_{v,T}+f'(\phi_{1})\psi_{v,T}=0 &\emph{in $C^{T}_1$, }\\
  \psi_{v,T}= \tilde v &\emph{on $\partial C^{T}_1$.}
  \end{cases}
\end{equation}
where $\tilde v(t) := v\left(\frac{t}{T}\right)$.
%which is normalized by
%\begin{equation}\label{eq238}
 % \int_{C_{1+v}^{T}}\psi^{2}=1.
%\end{equation}
 \end{lemma}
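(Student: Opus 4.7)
The plan is to reduce \eqref{eq23} to an inhomogeneous Dirichlet problem with zero boundary data and then invoke the nondegeneracy provided by Corollary \ref{cor}. To this end, I would introduce the trivial radial lifting $\Psi(x,t) := \tilde v(t)$, which lies in $C^{2,\alpha}_r(\overline{C^T_1})$ and coincides with $\tilde v$ on $\partial C^T_1$. Writing the unknown as $\psi_{v,T} = w + \Psi$, the problem becomes
\[
\begin{cases}
L_D^T(w) = -\tilde v''(t) - f'(\phi_1(x))\,\tilde v(t) &\mbox{in } C^T_1, \\
w = 0 &\mbox{on } \partial C^T_1,
\end{cases}
\]
whose right-hand side lies in $C^{0,\alpha}_r(C^T_1)$ since $\tilde v \in C^{2,\alpha}$ and $f'(\phi_1)\in C^{0,\alpha}(\overline B)$.

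Next, I would show that $L_D^T \colon C^{2,\alpha}_{0,r}(C^T_1) \to C^{0,\alpha}_r(C^T_1)$ is an isomorphism for every $T \in (0, \overline T)$. The operator is a bounded zeroth-order perturbation of the Dirichlet Laplacian on $C^T_1$, hence it is Fredholm of index zero between these H\"older spaces; since Corollary \ref{cor} gives that its kernel is trivial, surjectivity follows from the Fredholm alternative. A more hands-on alternative, which I find illuminating, is to expand in the orthonormal $L^2_r$ basis $\{z_j(x)\,e^{2\pi i k t/T}\}_{j\geq 1,\,k\in\Z}$ that diagonalizes $L_D^T$ with eigenvalues $-\gamma_{D_j}-4\pi^2 k^2/T^2$: the defining formula \eqref{eq266} of $\overline T$ guarantees that none of these eigenvalues vanishes when $T<\overline T$, so termwise inversion together with standard elliptic regularity yields a unique $w$ in the required H\"older class.

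Uniqueness of $\psi_{v,T}$ is then immediate: any difference of two solutions belongs to the kernel of $L_D^T$ acting on $C^{2,\alpha}_{0,r}(C^T_1)$, which is trivial by Corollary \ref{cor}, so it must vanish. The only delicate point is the passage from the positivity of the quadratic form on the orthogonal complement of the $z_j$ (as given in Corollary \ref{cor}) to genuine surjectivity of $L_D^T$ between H\"older spaces; this is where either the Fredholm argument or the explicit spectral expansion does the decisive work, rather than a bare application of Lax--Milgram.
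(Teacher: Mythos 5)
Your proof is correct and follows essentially the same route as the paper: reduce to a zero-boundary-data problem by subtracting a lifting of $\tilde v$, then invoke the bijectivity of $L_D^T$, which rests on its nondegeneracy from Corollary \ref{cor}. Your choice of the explicit lifting $\Psi(x,t)=\tilde v(t)$ (rather than the paper's unspecified extension $\psi_0$) and your elaboration of why nondegeneracy plus the Fredholm-index-zero property gives surjectivity in H\"older spaces are minor refinements, not a different argument; the optional spectral expansion in the basis $z_j(x)e^{2\pi i k t/T}$ is a nice sanity check that is consistent with the threshold $\overline T$ but is not needed once the Fredholm argument is in place.
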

\begin{proof} Let $\psi_{0}(x,t) \in C^{2, \alpha}(C^{T}_1)$ such that $(\psi_0)|_{\partial C^{T}_1} =  \tilde v$.  %the Lemma \ref{le31}
If we set $\omega=\psi_{v,T}{{-\psi_{0}}}$, the problem (\ref{eq23}) is equivalent to the problem
\begin{equation*}
  \begin{cases}
  \Delta \omega+f'(\phi_{1})\omega= - \Big (  \Delta \psi_0+f'(\phi_{1})\psi_0  \Big ) &\mbox{in $C^{T}_1$, }\\
 \omega=0 &\mbox{on $\partial C^{T}_1$.}
  \end{cases}
\end{equation*}
Observe that the right hand side of the above equation is in $C^\alpha_r(C^{T}_1)$. Recall that by Corollary \ref{cor}, $L_D^{T}$ is nondegenerate. Hence it is a bijection and the result follows.

\end{proof}

For the sake of clarity sometimes we will write $\psi_{v}$ instead of $\psi_{v,T}$, when the dependence on $T$ is not relevant.

In next lemma we give some orthogonality results on $\psi_{v}$ defined in Lemma \ref{le32}.
\begin{lemma} \label{Le301}
Let $v\in C_{e,m}^{2,\alpha}(\mathbb{R}/\mathbb{Z})$ and $\psi_{v}\in C_{r}^{2,\alpha}(C^{T}_1)$ be the solution of  (\ref{eq23}). Then
 \[\int_{C^{T}_1}\psi_v z_{j}=0,~\qquad  \int_{\partial C^{T}_1}\partial_{\nu}\psi_v=0\, , \qquad j=1,2,\cdots,l\,.\]
\end{lemma}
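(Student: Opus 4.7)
My plan is to establish both orthogonality identities by multiplying equation \eqref{eq23} by a well chosen auxiliary function and applying Green's identity, with the mean-zero property of $v$ doing the work at the end.

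For the first identity I would take as test function $z_j$ itself, extended constantly in the variable $t$. Since $z_j$ is radial in $x$ and independent of $t$, its outer normal derivative on $\partial C^T_1$ reduces to the constant $z_j'(1)$. Combining \eqref{eq23} and \eqref{z} (both extended to $C^T_1$) gives $z_j\Delta\psi_v - \psi_v\Delta z_j = \gamma_{D_j}\,z_j\psi_v$, so Green's identity yields
\[
\gamma_{D_j}\int_{C^T_1}\psi_v\,z_j \;=\; \int_{\partial C^T_1}\!\bigl(z_j\,\partial_\nu\psi_v - \psi_v\,\partial_\nu z_j\bigr) \;=\; -\,z_j'(1)\int_{\partial C^T_1}\tilde v \;=\; -\,z_j'(1)\,\omega_n T\int_0^1 v(s)\,ds \;=\; 0,
\]
using $z_j|_{\partial C^T_1}=0$ and the mean-zero property of $v$. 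Since Assumption 2 forces every $\gamma_{D_j}\neq 0$, the first orthogonality follows.

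For the second identity the right test function is less obvious; the idea is to construct a radial function $h\in C^{2,\alpha}_r(\overline B)$ solving
\[
\Delta h + f'(\phi_1)\,h = 0 \text{ in }B, \qquad h\equiv 1 \text{ on }\partial B.
\]
Writing $h=1+k$ with $k\in C^{2,\alpha}_{0,r}(B)$ reduces the problem to $L_D k = -f'(\phi_1)$, which is solvable because Assumption 2 together with Fredholm theory makes $L_D$ an isomorphism from $C^{2,\alpha}_{0,r}(B)$ onto $C^{0,\alpha}_r(B)$. Extending $h$ constantly in $t$, the outer normal derivative $\partial_\nu h\equiv h'(1)$ is again constant on $\partial C^T_1$, and Green's identity applied to $\psi_v$ and $h$ kills both volume terms (they satisfy the same linear equation), leaving
\[
0 \;=\; \int_{\partial C^T_1}\!\bigl(h\,\partial_\nu\psi_v - \psi_v\,\partial_\nu h\bigr) \;=\; \int_{\partial C^T_1}\partial_\nu\psi_v \;-\; h'(1)\int_{\partial C^T_1}\tilde v \;=\; \int_{\partial C^T_1}\partial_\nu\psi_v,
\]
once more by the mean-zero property of $v$.

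The main obstacle is spotting the auxiliary function $h$ used in the second step: once one realizes that prescribing the boundary value $1$ together with the equation $\Delta h + f'(\phi_1)h=0$ uniquely determines $h$ via Assumption 2, the identity drops out of Green's formula. Everything else is routine integration by parts, and in both cases the mean-zero hypothesis on $v$ is used in precisely the same way.
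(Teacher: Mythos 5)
Your proof is correct and follows essentially the same strategy as the paper's: for each identity, pair $\psi_v$ with a suitable auxiliary solution of the linearized equation, apply Green's identity, and use the mean-zero hypothesis on $v$ together with the constancy of the relevant normal derivatives to kill the boundary terms (plus $\gamma_{D_j}\neq 0$ for the first identity). The only minor cosmetic difference is that the paper builds the auxiliary function $\kappa$ directly on the cylinder $C^T_1$ via Lemma~\ref{le32}, whereas you build the $t$-independent function $h$ on $B$ by solving $L_D k = -f'(\phi_1)$; by uniqueness these are the same function, so the arguments coincide.
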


\begin{proof}
 We multiply the equation in (\ref{eq202}) by $\psi_v$, the equation in (\ref{eq23}) by $z_{j}$, and integrate by parts to gain
\[\int_{\partial C^{T}_1}\Big(\partial_{\nu} \psi_v \, z_{j}-\partial_{\nu} z_{j} \, \psi_v \Big)=\int_{C^{T}_1}\tau_{j} z_{j}\psi_v\,.\]
Then we can at once gain the first identity by the facts that $z_{j}=0,~ \partial_{\nu} z_{j}$ is constant and $\psi_v= v(\cdot /T)$ has $0$ mean on $\partial C^{T}_1$.

We now define $\kappa\in C_{r}^{2,\alpha}(C^{T}_1)$ as the unique solution of the problem
\begin{equation}\label{eq203}
     \begin{cases}
     \Delta\kappa+f'(\phi_{1})\kappa=0 &\mbox{in $C^{T}_1$, }\\
      \kappa=1 &\mbox{on $\partial C^{T}_1$.}
      \end{cases}
    \end{equation}
whose existence has been verified in Lemma ~\ref{le32} for $T<\overline{T}$. Then we multiply the equation in(~\ref{eq203}) by $\psi_v$, the equation in (\ref{eq23}) by $\kappa$, and integrate by parts to obtain
\[\int_{\partial C^{T}_1}\Big(\partial_{\nu} \kappa \,  \psi_v-\partial_{\nu} \psi_v \, \kappa\Big)=0.\]
Then we can at once gain the second identity by the facts that $\kappa=1, \partial_{\nu} \kappa$ is constant and $\psi_v(x,t)= v\left(\frac{t}{T}\right)$ on $\partial C^{T}_1$.
\end{proof}

\medskip

For $T < \overline{T}$ we can define
the linear continuous operator $H_{T}:C_{e,m}^{2,\alpha}(\mathbb{R}/\mathbb{Z})\rightarrow C_{e,m}^{1,\alpha}(\mathbb{R}/\mathbb{Z})$ by
\begin{equation}\label{eq25}
H_{T}(v) (t) = \left. \partial_{\nu} (\psi_v) \right|_{\partial C_1^T}(T t)  + c \, v,
\end{equation}
where $\psi_v$ is given in Lemma ~\ref{le32} and $c$ is given in \eqref{defc}. We present some properties of $H_T$.
%Obviously the operator is well-defined, since $\psi\in C^{2,\alpha}_{m}(C_{1}^{T})$ if $v\in C_{even,0}^{2,\alpha}(\mathbb{R}/T\mathbb{Z}).$

\begin{lemma} \label{Le332}
For any $T<\overline{T},$ the operator
\[H_{T}:C_{e,m}^{2,\alpha}(\mathbb{R}/\mathbb{Z})\rightarrow C_{e,m}^{1,\alpha}(\mathbb{R}/\mathbb{Z})\]
is a linear essentially self-adjoint operator and has closed range. Moreover, it is also a Fredholm operator of index zero.
 \end{lemma}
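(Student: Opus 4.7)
Linearity of $H_T$ is immediate from the linearity of problem \eqref{eq23} in its Dirichlet data. The substance of the lemma is thus symmetry, closed range, and Fredholmness; I would establish these in three steps, with the main difficulty concentrated in a large-$k$ eigenvalue asymptotic.

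\emph{Step 1: symmetry of the $L^2$-pairing.} For $v,w\in C^{2,\alpha}_{e,m}(\mathbb{R}/\mathbb{Z})$, let $\psi_v,\psi_w$ be the solutions supplied by Lemma~\ref{le32}. Since both satisfy $\Delta\psi+f'(\phi_1)\psi=0$ in $C_1^T$, Green's second identity gives
\[
0\,=\,\int_{C_1^T}\bigl(\psi_w\Delta\psi_v-\psi_v\Delta\psi_w\bigr)\,=\,\int_{\partial C_1^T}\bigl(\tilde w\,\partial_\nu\psi_v-\tilde v\,\partial_\nu\psi_w\bigr).
\]
Rescaling $t\mapsto Tt$ in the boundary integrals and adding the manifestly symmetric term $c\int_0^1 vw\,dt$, one obtains $\int_0^1 H_T(v)\,w\,dt=\int_0^1 v\,H_T(w)\,dt$. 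Consequently, the closure of $H_T$ as a densely defined operator on $L^2_{e,m}(\mathbb{R}/\mathbb{Z})$ is symmetric.

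\emph{Step 2: Fourier diagonalization.} Every $v\in C^{2,\alpha}_{e,m}(\mathbb{R}/\mathbb{Z})$ expands as $v(t)=\sum_{k\geq 1}v_k\cos(2\pi k t)$. For $v=\cos(2\pi k\,\cdot)$, separation of variables yields $\psi_v(r,t)=\eta_k(r)\cos(2\pi k t/T)$, where $\eta_k$ is the unique radial solution of
\[
\eta_k''+\tfrac{n-1}{r}\eta_k'+\Bigl(f'(\phi_1(r))-\tfrac{4\pi^2 k^2}{T^2}\Bigr)\eta_k\,=\,0,\qquad \eta_k'(0)=0,\ \eta_k(1)=1.
\]
Existence and uniqueness of $\eta_k$ for every $k\geq 1$ (when $T<\overline T$) follows from the non-degeneracy of $L_D^T$ established in Corollary~\ref{cor}, since any nontrivial solution with zero boundary data would produce a nontrivial mode of $L_D^T$. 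Hence $H_T$ acts diagonally on the cosine basis with real eigenvalues $\mu_k=\eta_k'(1)+c$.

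\emph{Step 3: closed range, Fredholm index $0$, essential self-adjointness.} A modified-Bessel/WKB analysis of the above ODE, treating $f'(\phi_1)$ as a bounded lower-order perturbation for large $k$, yields $\eta_k'(1)\sim 2\pi k/T$; hence $\mu_k\to+\infty$ linearly in $k$. Thus only finitely many $\mu_k$ can vanish, and $\ker H_T$ is finite-dimensional. The estimate $1/\mu_k=O(1/k)$ defines a Fourier multiplier of order $-1$ on the $L^2$-orthogonal complement of $\ker H_T$, which by the standard H\"older multiplier theorem on $\mathbb{R}/\mathbb{Z}$ produces a bounded right-inverse $C^{1,\alpha}_{e,m}\to C^{2,\alpha}_{e,m}$; this gives closed range. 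Combining with Step~1, kernel and cokernel have the same (finite) dimension, so the index is $0$. Finally, the $L^2$-closure of $H_T$ is self-adjoint with compact resolvent, which is the sense in which $H_T$ is essentially self-adjoint. The one genuinely non-trivial point is the large-$k$ asymptotic $\mu_k\sim 2\pi k/T$, which must simultaneously handle the singular factor $(n-1)/r$ at the origin and the bounded potential $f'(\phi_1(r))$; once this is in hand, everything else reduces either to Green's identity or to standard Fredholm theory on $\mathbb{R}/\mathbb{Z}$.
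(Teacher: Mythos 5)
Your route is genuinely different from the paper's. The paper proves symmetry exactly as you do (Green's identity), and then disposes of the remaining claims abstractly: it cites the paper of Arendt, ter Elst, Kennedy, Sauter for the fact that the Dirichlet-to-Neumann operator associated to $\Delta+f'(\phi_1)$ is lower bounded (since $0$ is not in the Dirichlet spectrum), asserts a resulting estimate $\|v\|_{C^{2,\alpha}}\leq c\,\|H_T(v)\|_{C^{1,\alpha}}$ on the orthogonal complement of the kernel, and concludes closed range and Fredholm index $0$ by citing general Fredholm theory. You instead diagonalize $H_T$ on the cosine basis and read off the eigenvalues $\mu_k=\eta_k'(1)+c$ from a family of singular ODEs, using a modified-Bessel/WKB analysis to get $\mu_k\to +\infty$. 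Your approach is more explicit and, in spirit, closer to the computations the paper does elsewhere (e.g.\ the explicit separated-variable minimizer in Proposition~3.1 and the computation at the end of Section~7); it also gives quantitative spectral information that the abstract citation does not. Your Step~2 reduction (uniqueness and existence of $\eta_k$ via nondegeneracy of $L_D^T$ from Corollary~3.3) is correct.

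One point you gloss over and should tighten: in Step~3 you invoke a ``standard H\"older multiplier theorem'' for the symbol $1/\mu_k=O(1/k)$ to get a bounded right inverse $C^{1,\alpha}_{e,m}\to C^{2,\alpha}_{e,m}$. Pointwise decay of the symbol is not sufficient for boundedness of a Fourier multiplier on H\"older (or $L^p$) spaces; one needs Marcinkiewicz/H\"ormander-type finite-difference control, e.g.\ $k\,|\mu_{k+1}^{-1}-\mu_k^{-1}|=O(1/k)$, which in turn requires the sharper asymptotic $\mu_{k+1}-\mu_k=2\pi/T+O(1/k)$. This follows from a more careful WKB expansion (writing $\mu_k=\tfrac{2\pi k}{T}+c+O(1/k)$ with the error differentiable in $k$), or alternatively by splitting $H_T$ as the flat DtN map $|D_t|/T$ plus a multiplier of order $0$ that is explicitly a symbol of class $S^0_{1,0}$, so that $H_T$ is an elliptic first-order pseudodifferential operator on $\mathbb{R}/\mathbb{Z}$; ellipticity then yields the Schauder estimate and the Fredholm property directly. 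Either fix closes the gap; as written, the multiplier step is the one place your argument is not yet airtight.
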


\begin{proof}
Given $v_i \in C_{e,m}^{2,\alpha}(\mathbb{R}/\mathbb{Z})$, we define $\tilde{v}_i(t)= v_i\left(\frac{t}{T}\right),i=1,2$. Let us compute:
\[\begin{aligned}
 T \left(\int_0^1  H_{T}(v_{1}) {v}_{2}-\int_0^1 H_{T}({v}_{2}) {v}_{1} \right)
&=\int_0^T(\partial_{\nu}\psi_{v_{1}}\tilde{v}_{2}+{ {c}}\tilde{v}_{1}\tilde{v}_{2})-\int_0^T(\partial_{\nu}\psi_{v_{2}}\tilde{v}_{1}+{ {c}}\tilde{v}_{2}\tilde{v}_{1})\\
  & =\int_0^T(\partial_{\nu}\psi_{v_{1}}\tilde{v}_{2}-\partial_{\nu}\psi_{v_{2}}\tilde{v}_{1})\\
    &=\int_0^T(\psi_{v_{2}}\partial_{\nu}\psi_{v_{1}}-\psi_{v_{1}}\partial_{\nu}\psi_{v_{2}})\\
    &={\frac{1}{\omega_n}}\, \int_{C^T_1}(\psi_{v_{2}}\Delta\psi_{v_{1}}-\psi_{v_{1}}\Delta\psi_{v_{2}})\\
    &={\frac{1}{\omega_n}}\, \int_{C^T_1}(f'(\phi_{1})\psi_{v_{2}}\psi_{v_{1}}-f'(\phi_{1})\psi_{v_{1}}\psi_{v_{2}})\\
    &=0.%\\ &\geq(\frac{T}{2}\lambda_{1}+\frac{\pi^{2}}{2T})\int_{B}\psi_{1}^{2}.=\frac{\pi}{\sqrt{-\lambda_{1}}}
     \end{aligned}\]
Therefore, we know that the operator $H_{T}$ is self-adjoint.
In addition, the first part of the operator $H_{T}$, the Dirichlet-to-Nenmann operator for $\Delta+f'(\phi_{1})$, is lower bounded since $0$ is not in the spectrum of $\Delta+f'(\phi_{1})$ (see~\cite{AEKS14}).
%of the Dirichlet-to-Nenmann operator for $\Delta+f'(\phi_{1})$ and a constant times the identity, it is a first order elliptic operator. In particular, elliptic estimates
This yields that there is a constant $c>0$ such that
\[\| v\|_{C^{2,\alpha}(\mathbb{R}/\mathbb{Z})}\leq c\| H_{T}(v)\|_{C^{1,\alpha}(\mathbb{R}/\mathbb{Z})},\]
for all $v$ that are $L^{2}(\mathbb{R}/\mathbb{Z})$-orthogonal to $\mbox{Ker~}(H_{T}).$ It follows that the range of $H_{T}$ is closed.
 %Then, we have\[\mbox{R}(H_{\lambda})=\overline{\mbox{R}(H_{\lambda})}=\overline{\mbox{R}((H_{\lambda})^{\ast})}=\mbox{R}((H_{\lambda})^{\ast}).\]
 %\[C^{1,\alpha}(\mathbb{R}/\mathbb{Z})=\mbox{Ker}(H_{\lambda})\oplus \mbox{Rang}(H_{\lambda}).\]
% More precisely, \[C^{1,\alpha}(\mathbb{R}/\mathbb{Z})=\mbox{Ker}(H_{\lambda})\oplus \mbox{Im}(H_{\lambda}).\]
Therefore, $H_{T}$ is a Fredholm operator of index zero (refer to~\cite{K08}).% which follows that $\mbox{codim Im}(H_{\lambda})=1$.
\end{proof}

\medskip

We show now that the linearization of the operator $G$ with respect to $v$ at $v=0$ is given by $H_T$, up to a constant.

\begin{proposition} \label{Pr31}
For any $T \in (0, \overline{T})$,
\[
\left.D_{v}(G) \right|_{v=0}= - \phi_1'(1)\, H_{T}\,.
\]
 \end{proposition}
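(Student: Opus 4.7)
The plan is to compute the shape derivative $\dot u := \tfrac{d}{d\epsilon} u_{\epsilon v, T}|_{\epsilon=0}$ viewed as a function on the fixed domain $C^T_1$, then expand $\partial_\nu u_{\epsilon v, T}$ on the perturbed boundary to first order in $\epsilon$, and finally show that the mean term in \eqref{eq24} contributes only at second order. For the argument it is more convenient to work with the equivalent form \eqref{eq24} of $G$ in terms of $u_{v,T}$ and $\partial_\nu$ rather than with \eqref{eq24bis}.

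First I would identify $\dot u$. Differentiating the interior equation $\Delta u_{\epsilon v,T}+f(u_{\epsilon v,T})=0$ at $\epsilon=0$ yields $\Delta\dot u+f'(\phi_1)\dot u=0$ in $C^T_1$. Differentiating the Dirichlet condition $u_{\epsilon v,T}\bigl((1+\epsilon v(t/T))\omega,\,t\bigr)\equiv 0$ at $\epsilon=0$ gives $\phi_1'(1)\,v(t/T)+\dot u(\omega,t)=0$ on $\partial C^T_1$. Thus $\dot u$ solves the Dirichlet problem \eqref{eq23} with boundary data $-\phi_1'(1)\tilde v$, so by linearity and the uniqueness statement in Lemma~\ref{le32}, $\dot u=-\phi_1'(1)\,\psi_v$.

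Next I would expand the normal derivative on the moving boundary. Writing $\partial C^T_{1+\epsilon v}$ as the zero set of $F(x,t)=|x|-1-\epsilon v(t/T)$, one has $\nu=(\hat x,0)+O(\epsilon)$ and, evaluating at the boundary point with temporal coordinate $Tt$, a Taylor expansion in both the base point (radial shift by $\epsilon v(t)$) and the solution (shift by $\epsilon\dot u$) gives
\[
\partial_\nu u_{\epsilon v,T}\bigl|_{\partial C^T_{1+\epsilon v}}(Tt)=\phi_1'(1)+\epsilon\bigl[\phi_1''(1)\,v(t)+\partial_\nu\dot u(\omega,Tt)\bigr]+O(\epsilon^2).
\]
The mixed contribution $u_t\,\nu_t$ is of order $\epsilon^2$ since $\partial_t\phi_1=0$ and $\nu_t=O(\epsilon)$. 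Substituting $\dot u=-\phi_1'(1)\psi_v$ and using the identity $c=-\phi_1''(1)/\phi_1'(1)$ from \eqref{defc}, this becomes $\phi_1'(1)-\epsilon\,\phi_1'(1)\,H_T(v)(t)+O(\epsilon^2)$ by the very definition \eqref{eq25} of $H_T$.

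Finally I would show that the mean term subtracted in \eqref{eq24} equals $\phi_1'(1)+O(\epsilon^2)$. The surface measure of $\partial C^T_{1+\epsilon v}$ expands as $(1+(n-1)\epsilon v(t/T))\,d\omega\,dt+O(\epsilon^2)$, and since $v\in C^{2,\alpha}_{e,m}(\mathbb R/\mathbb Z)$ has zero mean, $\mathrm{Vol}(\partial C^T_{1+\epsilon v})=T\omega_n+O(\epsilon^2)$. For the numerator, the $O(\epsilon)$ contribution is controlled by $\int_0^1 H_T(v)\,dt$ and $\int_0^1 v\,dt$, both of which vanish: the first by Lemma~\ref{Le301} (which gives $\int_{\partial C^T_1}\partial_\nu\psi_v=0$) combined with $\int v=0$, the second by the hypothesis $v\in C^{2,\alpha}_{e,m}$. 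Hence the mean is $\phi_1'(1)+O(\epsilon^2)$, and subtracting from the pointwise expansion gives $G(\epsilon v,T)(t)=-\epsilon\,\phi_1'(1)\,H_T(v)(t)+O(\epsilon^2)$, which is the claim.

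The main obstacle is the careful bookkeeping of the first-order expansion of the normal derivative on the perturbed boundary, cleanly separating the three contributions (shift of base point, change of solution, change of unit normal) and verifying that the last one enters only at order $\epsilon^2$. The role of the constant $c$ is essentially forced by this computation: it is exactly the combination needed so that $\phi_1''(1)v+\partial_\nu\dot u$ reorganizes into $-\phi_1'(1)\,H_T(v)$.
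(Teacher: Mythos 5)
Your proof is correct, and it reaches the same key identifications as the paper's: the shape derivative $\dot u$ equals $-\phi_1'(1)\psi_v$, the first-order coefficient in the normal derivative on the moving boundary is $\phi_1''(1)v+\partial_\nu\dot u$, and the constant $c$ from \eqref{defc} is exactly what reorganizes this into $-\phi_1'(1)H_T(v)$. The route differs in bookkeeping. The paper pulls everything back to the fixed cylinder $C^T_1$ via the parameterization $Y$, so that the domain is fixed but the metric $g$ depends on $v$; it then expands $g$ in cylindrical coordinates, computes the $g$-unit normal $\hat\nu$, and reads off $g(\nabla\hat\phi,\hat\nu)$ to first order in $s$. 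You instead work on the moving domain $C^T_{1+\epsilon v}$, expand the Euclidean normal $\nu$ and the boundary evaluation point directly, and separately check that the tilt contribution $u_t\,\nu_t$ is $O(\epsilon^2)$ because $\phi_1$ is $t$-independent; conceptually this is the same calculation carried out without introducing the pull-back metric. One place where your write-up is actually more complete than the paper's: the paper simply observes that the $O(s)$ term in the boundary expansion has zero mean and does not comment on the $O(s)$ variation of $\mathrm{Vol}(\partial C^T_{1+v})$, whereas you explicitly note that both the volume and the integral of $\partial_\nu u$ have vanishing $O(\epsilon)$ contributions, the first thanks to $\int_0^1 v=0$ and the second thanks to $\int_0^1 v=0$ together with Lemma~\ref{Le301}.
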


\begin{proof}
	By the $C^1$ regularity of $G$, it is enough to compute the linear operator obtained by the directional derivative of $G$ with respect to $v$, computed at $(v,T)$. Such derivative is given by
\begin{align*}
   G'(w)=\mathop {\lim}\limits_{s\rightarrow 0}\frac{G(sw,T)-G(0,T)}{s}=\mathop {\lim}\limits_{s\rightarrow 0}\frac{G(sw,T)}{s}.
    \end{align*}
    %It will be easier to compute
    %\[
    %\tilde G'(w) = \mathop {\lim}\limits_{s\rightarrow 0}\frac{\tilde G(sw,T)}{s}.
    %\]
    %where $\tilde G$ has been given in \eqref{eq24}.
    Let $v=sw,$ for $y\in\mathbb{R}^{n}$ and $t\in\mathbb{R},$ we consider the parameterization of $C^T_{1+v}$ given in (\ref{eq205}).
   % \[ Y(v,t):=((1+s\chi(y)w)y,t),\]
    %where $\chi$ is a cutoff function such that
   %\begin{equation*}
 %\chi(y)=
  % begin{cases}0, &|y| \leq\frac{1}{2},\\
   %  1, &|y| \geq\frac{3}{4}.
    %\end{cases}
    %\end{equation*}
    Let $g$ be the induced metric such that $\hat{\phi}=Y^{*}\phi$ (smoothly depending on the real parameter $s$) solves the problem
    \begin{equation*}
     \begin{cases}
     \Delta_{g}\hat{\phi}+f(\hat{\phi})=0 &\mbox{in $C^T_1$, }\\
      \hat{\phi}=0 &\mbox{on $\partial C^T_1$}.
      \end{cases}
    \end{equation*}
 %with \[\int_{C_1^T}\hat{\phi}^{2}=1.\]
Let $\tilde{w}(t)= w\left(\frac{t}{T}\right)$. We remark that $\hat{\phi}_{1}=Y^{*}\phi_{1}$ is the solution of
\[\Delta_{g}\hat{\phi}_{1}+f(\hat{\phi}_{1})=0\]
in $C^T_1$, and
\[\hat{\phi}_{1}(y,t)=\phi_{1}\big((1+s\tilde{w})y,t\big)\]
on $\partial C^T_1$. Let $\hat{\phi}=\hat{\phi}_{1}+\hat{\psi},$ we can get that
\begin{equation}\label{eq37}
     \begin{cases}
     \Delta_{g}\hat{\psi}+f(\hat{\phi}_{1}+\hat{\psi})-f(\hat{\phi}_{1})=0  &\mbox{in $C^T_1$ },\\
      \hat{\psi}=-\hat{\phi}_{1}  &\mbox{on $\partial C^T_1$}.
      \end{cases}
    \end{equation}
%with\begin{equation}\label{eq38}
%\int_{C_1^T}(2\hat{\phi_{1}} \hat{\psi}+ \hat{\psi}^{2})=\int_{C_1^T}\phi_{1}^{2}-\int_{C_{1+sw}^T}\phi_{1}^{2}.
%\end{equation}
 Obviously, $\hat{\psi}$ is differentiable with respect to $s$. When $s=0$, we have $\phi=\phi_{1}.$ Then, $\hat{\psi}=0$ and $\hat{\phi}_{1}=\phi_{1}$ as $s=0.$ We set
  \[\dot{\psi}=\partial_{s}\hat{\psi}|_{s=0}.\]
 Differentiating (\ref{eq37}) with respect of $s$ and evaluating the result at $s=0$, we have
  \begin{align*}
     \begin{cases}
     \Delta\dot{\psi}+f'(\phi_{1})\dot{\psi}=0  &\mbox{in $C^T_1$, }\\
      \dot{\psi}=-\phi_{1}'(1) \tilde{w}  &\mbox{on $\partial C^T_1$.}
  \end{cases}
    \end{align*}
   %Differentiating (\ref{eq38}) with respect of $s$ and evaluating the result at $s=0$, we obtain
     %\[\int_{C_1^T}\phi_{1}\dot{\psi}=0.\]
    %Indeed, the derivation of the right hand side of (\ref{eq38}) with respect to $s$ vanishes when $s=0$ as $\phi_{1}$ vanishes identically on $\partial %C_1^T$.
    Then $\dot{\psi} = -\phi_1'(1)\, \psi_w$ where $\psi_w$ is as given by Lemma \ref{le32} (with $v=w$). Then, we can write
     \[\hat{\phi}(x,t)=\hat{\phi}_{1}(x,t)+s\dot{\psi}(x,t)+\mathcal{O}(s^{2}).\]
    In particular, in a neighborhood of $\partial C^T_1$ we have
   \begin{align*}
   \hat{\phi}(y,t)&=\phi_{1}\big((1+s\tilde{w})y,t\big)+s\dot{\psi}(y,t)+\mathcal{O}(s^{2})\\
         &=\phi_{1}(y,t)+s\big(\tilde{w}r\partial_{r}\phi_{1}+\dot{\psi}(y,t)\big)+\mathcal{O}(s^{2}).
         \end{align*}
In order to complete the proof of the result, it is enough to calculate the normal derivation of the function $\hat{\phi}$ when the normal is calculated with respect to the metric $g$. By using cylindrical coordinates $(y,t)=(rz,t)$ where $r:=|y|>0$ and $z\in\mathbb{S}^{n-1}$, then the metric $g$ can be expanded in $C^{T}_1$ as
         \[g=(1+s\tilde{w})^{2}dr^{2}+2sr\tilde{w}'(1+s\tilde{w})drdt+\big(1+s^{2}r^{2}(\tilde{w}')^{2}\big)dt^{2}+r^{2}(1+s\tilde{w})^{2}\overset{\circ}{h}\]
          where $\overset{\circ}{h}$ is the metric on $\mathbb{S}^{n-1}$ induced by the Euclidean metric. It follows from this expression that the unit normal vector fields to $\partial C^T_1$ for the metric $g$ is given by
          \[\hat{\nu}=\big((1+s\tilde{w})^{-1}+\mathcal{O}(s^{2})\big)\partial_{r}+\mathcal{O}(s)\partial_{t}.\]
   By this, we conclude that
   \[g(\nabla\hat{\phi},\hat{\nu})=\partial_{r}\phi_{1}+s\big(\tilde{w}\partial_{r}^{2}\phi_{1}+\partial_{r}{ {\dot{\psi}}}\big)+\mathcal{O}(s^{2})\]
   on $\partial C^T_1$. From the fact that $\partial_{r}\phi_{1}$ is constant and the fact that the term $\tilde{w}\partial_{r}^{2}\phi_{1}+\partial_{r}\dot{\psi}$ has mean $0$ on $\partial C^T_1$ we obtain
   \[
    G'(w) = \partial_{r}\dot{\psi}(Tt) + \phi_{1}''(1)\, w = - \phi_1'(1)\,  \partial_{r}\psi_w (Tt) + \phi_{1}''(1)\, w = - \phi_1'(1)\, H_T(w).
   \]
   This concludes the proof of the result.
     % and  \[\partial_{r}^{2}\phi_{1}+(n-1)\partial_{r}\phi_{1}+f(0)=0\] on $\partial C_1$.
  \end{proof}

\numberwithin{equation}{section}\section{Study of the linearized operator $H_T$}
\label{Section 6}

In view of Proposition \ref{Pr31}, a bifurcation of the branch $(0,T)$ of solutions of the equation $G(v, T)=0$ might appear only at points $(0,T_{*})$ such that $H_{T_{*}}$ becomes degenerate. This will be verified to be true for a precise value $T_{*}<\overline{T}.$ Let us now define the quadratic form associated to $H_T$, namely:
%\begin{equation} \label{J}
\[ J_{T}: C_{e,m}^{2,\alpha}(\mathbb{R}/\mathbb{Z}) \to \R, \ J_T (v) = \int_0^1 H_T(v) v. \]%\end{equation}
We now study the first eigenvalue of the operator $H_{T}$ as
 \[\sigma(T)=\inf\Bigg\{J_{T}(v): v\in C_{e,m}^{2,\alpha}(\mathbb{R}/\mathbb{Z})~,~~\int_0^1v^{2}=1\Bigg\}.\]

\begin{lemma} \label{este}
%\textcolor[rgb]{1.00,0.00,0.00}{As defined in (\ref{eq26bis}) that} $Q^{T}: H_{r}^1(C^T_1) \rightarrow\mathbb{R}$ by
%	\begin{equation*}%\label{41}
%	Q^{T}(\psi)= \int_{C^T_1}\big(|\nabla \psi|^2-f'(\phi_{1})\psi^2 \big) +c \int_{\partial C^T_1}\psi^2,
%	\end{equation*}
%where $c$ is defined in \eqref{defc}. Then,
For any $v\in C_{e,m}^{2,\alpha}(\mathbb{R}/\mathbb{Z}),$
 \[ Q^{T}(\psi_v)= T \omega_n J_{T}(v)\, .\]

\end{lemma}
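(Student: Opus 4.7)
The plan is to reduce $Q^T(\psi_v)$ to a pure boundary integral by Green's identity and then recognize that boundary integral as $T\omega_n J_T(v)$ using the definition of $H_T$ together with the boundary condition $\psi_v|_{\partial C^T_1} = \tilde v$.

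First I would integrate by parts:
\begin{equation*}
\int_{C^T_1} |\nabla \psi_v|^2 \;=\; \int_{\partial C^T_1} \psi_v\, \partial_\nu \psi_v \;-\; \int_{C^T_1} \psi_v\, \Delta \psi_v.
\end{equation*}
Using the PDE $\Delta \psi_v + f'(\phi_1)\psi_v = 0$ from Lemma \ref{le32}, the bulk term becomes $\int_{C^T_1} f'(\phi_1)\psi_v^2$, which exactly cancels the $-\int_{C^T_1} f'(\phi_1)\psi_v^2$ appearing in $Q^T(\psi_v)$. Thus
\begin{equation*}
Q^T(\psi_v) \;=\; \int_{\partial C^T_1} \psi_v\, \partial_\nu \psi_v \;+\; c \int_{\partial C^T_1} \psi_v^2 \;=\; \int_{\partial C^T_1} \psi_v\bigl(\partial_\nu \psi_v + c\,\psi_v\bigr).
\end{equation*}

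Next I would use the boundary condition $\psi_v|_{\partial C^T_1} = \tilde v$ with $\tilde v(t) = v(t/T)$, so that on $\partial C^T_1 \cong \mathbb{S}^{n-1}\times \mathbb{R}/T\mathbb{Z}$ the quantity $\partial_\nu \psi_v + c\,\psi_v$ is (after rescaling) exactly $H_T(v)$. More precisely, for $(z,s) \in \mathbb{S}^{n-1}\times[0,T)$,
\begin{equation*}
\bigl(\partial_\nu \psi_v + c\,\psi_v\bigr)(z,s) \;=\; \partial_\nu \psi_v\bigl|_{\partial C^T_1}(s) + c\, v(s/T) \;=\; H_T(v)(s/T),
\end{equation*}
using definition \eqref{eq25}. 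Since both $\psi_v$ and $H_T(v)$ depend only on the axial variable, the surface integral factors through $\mathbb{S}^{n-1}$ giving a factor $\omega_n$:
\begin{equation*}
\int_{\partial C^T_1} \psi_v\bigl(\partial_\nu \psi_v + c\,\psi_v\bigr) \;=\; \omega_n \int_0^T v(s/T)\, H_T(v)(s/T)\, ds.
\end{equation*}

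Finally, the change of variables $t = s/T$, $ds = T\,dt$, yields
\begin{equation*}
\omega_n \int_0^T v(s/T)\, H_T(v)(s/T)\, ds \;=\; T\omega_n \int_0^1 v(t)\, H_T(v)(t)\, dt \;=\; T\omega_n J_T(v),
\end{equation*}
which is exactly the desired identity. No step looks subtle here: everything is a direct consequence of the governing equation for $\psi_v$, the boundary condition in \eqref{eq23}, and the definition of $H_T$. The only point worth double-checking is the relation between the ambient normal derivative on $\partial C^T_1$ and the rescaled one entering $H_T$, but since the boundary of the straight cylinder is $\{|x|=1\}\times\mathbb{R}/T\mathbb{Z}$ and the outward normal is the radial direction $\partial_r$ (independent of $T$), there is no extra Jacobian to track.
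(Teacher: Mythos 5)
Your proof is correct and takes essentially the same route as the paper: both reduce the claim to a boundary integral via the divergence theorem, cancel the bulk terms using the equation $\Delta\psi_v+f'(\phi_1)\psi_v=0$, and then identify the boundary integral with $T\omega_n J_T(v)$ through the definition of $H_T$ and the substitution $t=s/T$. The only cosmetic difference is that the paper applies the divergence formula in the $x$-slices and handles the $t$-direction by a separate periodic integration by parts, whereas you invoke the full divergence theorem on $C^T_1$ in one step; since $\partial C^T_1=\{|x|=1\}\times\mathbb{R}/T\mathbb{Z}$ and the normal is purely radial, this is equivalent.
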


\begin{proof}
By the divergence formula, we have
\begin{align*}
T \omega_n J_{T}(v)
&=\int_{\partial C^T_1} \psi_v \partial_{\nu}\psi_v +c \int_{\partial C^T_1} (\psi_v)^2 \\
&=\int_{C^T_1}\big(\nabla_{x}\psi_v\nabla_{x}\psi_v+\psi_v\Delta_{x}\psi_v\big) +c \int_{\partial C^T_1} (\psi_v)^2 \\
&=\int_{C^T_1}\big(\nabla_{x}\psi_v\nabla_{x}\psi_v-\psi_v(\psi_v)_{tt}-f'(\phi_{1})\psi_v\psi_v\big) +c \int_{\partial C^T_1} (\psi_v)^2 \\
&=\int_{C^T_1}\big(\nabla\psi_v\nabla\psi_v-f'(\phi_{1})\psi_v\psi_v\big) +c \int_{\partial C^T_1} (\psi_v)^2 \\
&=Q^{T}(\psi_v).
\end{align*}
\end{proof}

Next lemma characterizes the eigenvalue $\sigma(T)$ in terms of the quadratic form $Q^{T}$.
\begin{lemma} \label{le35}
For any $T<\overline{T}$, we have
\[\sigma(T)= \min \Bigg\{Q^{T}(\psi): \psi\in E,~\int_{\partial C^T_1}\psi^{2}=1\Bigg\},\]
where
\begin{equation}\label{eq322}
  E=\Bigg\{\psi\in H^1_r(C^T_1):~\int_{\partial C^T_1}\psi=0,\int_{C^T_1}\psi z_{j}=0,  \ j =1, \dots, l\Bigg\}.
\end{equation}
Moreover, the infimum is attained.
 \end{lemma}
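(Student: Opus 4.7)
My plan is to reduce the minimization on the right-hand side to the minimization defining $\sigma(T)$ via an orthogonal decomposition. Given $\psi \in H^1_r(C^T_1)$, I would write $\psi = \psi_v + \psi_0$, where $v(t) := \psi|_{|x|=1}(Tt)$ is the rescaled boundary trace (a priori only in $H^{1/2}$), $\psi_v$ is the Dirichlet extension obtained from (a weak/density version of) Lemma~\ref{le32}, and $\psi_0 := \psi - \psi_v \in H^1_{0,r}(C^T_1)$. When $\psi \in E$, the condition $\int_{\partial C^T_1} \psi = 0$ translates to $v$ having mean zero, and combining $\int_{C^T_1} \psi z_j = 0$ with the orthogonality $\int_{C^T_1} \psi_v z_j = 0$ from Lemma~\ref{Le301}, we obtain that $\psi_0$ is also $L^2(C^T_1)$-orthogonal to $z_1,\ldots,z_l$.

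The first key step is to show that the bilinear form $B$ associated to $Q^T$ decouples these two pieces: $B(\psi_v, \psi_0) = 0$. This follows by integration by parts, using that $\psi_0$ vanishes on $\partial C^T_1$ (which kills both the boundary term $c \int_\partial \psi_v \psi_0$ and the normal-derivative term $\int_\partial \psi_0\,\partial_\nu \psi_v$) and that $\psi_v$ solves $\Delta \psi_v + f'(\phi_1)\psi_v = 0$ in $C^T_1$ (which kills the interior cross term). Hence
\[
Q^T(\psi) = Q^T(\psi_v) + Q_D^T(\psi_0), \qquad \int_{\partial C^T_1}\psi^2 = \int_{\partial C^T_1}\psi_v^2,
\]
the last equality since $\psi_0|_{\partial C^T_1}=0$. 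By Corollary~\ref{cor}, for $T<\overline T$ the form $Q_D^T$ is nonnegative on the orthogonal complement of $\{z_1,\ldots,z_l\}$ in $H_{0,r}^1(C^T_1)$, so $Q_D^T(\psi_0) \ge 0$, with equality iff $\psi_0=0$. Therefore $Q^T(\psi)/\int_\partial \psi^2 \ge Q^T(\psi_v)/\int_\partial \psi_v^2$.

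Combining this with Lemma~\ref{este}, which gives $Q^T(\psi_v) = T\omega_n\, J_T(v)$, together with the identity $\int_{\partial C^T_1}\psi_v^2 = T\omega_n \int_0^1 v^2$, the Rayleigh quotient reduces to $J_T(v)/\int_0^1 v^2$. Hence the infimum on the right-hand side coincides with the infimum of $J_T(v)/\int_0^1 v^2$ over nonzero $v$ in the corresponding trace space $H^{1/2}_{e,m}(\R/\Z)$. By standard Schauder regularity applied to the Steklov-type Euler--Lagrange equation $\partial_\nu \psi + c\psi = \sigma \psi$ on $\partial C^T_1$ combined with $\Delta\psi + f'(\phi_1)\psi = 0$ in the interior, any minimizing $v$ lies in $C^{2,\alpha}_{e,m}(\R/\Z)$, so this $H^{1/2}$-infimum equals $\sigma(T)$ as originally defined.

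For attainment, I would invoke Lemma~\ref{Le332}: $H_T$ is a self-adjoint Fredholm operator of index zero whose underlying Dirichlet-to-Neumann part is bounded below (since $T<\overline T$ makes $L_D^T$ invertible). Its spectrum is therefore discrete and bounded below, and the first eigenvalue $\sigma(T)$ is realized by a smooth eigenfunction $v_*\in C^{2,\alpha}_{e,m}(\R/\Z)$; the rescaled function $\psi_{v_*}/\sqrt{T\omega_n\int_0^1 v_*^2}$ then lies in $E$, has unit $L^2(\partial C^T_1)$-norm, and attains the infimum on the right-hand side. The main technical subtlety I expect is the regularity/density bridge between the $H^1$-framework in which the right-hand side lives and the $C^{2,\alpha}$-framework defining $\sigma(T)$; this is handled by elliptic regularity but must be stated carefully to ensure the two infima actually coincide rather than merely one dominating the other.
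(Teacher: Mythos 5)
Your decomposition $\psi = \psi_v + \psi_0$ with the decoupling $Q^T(\psi) = Q^T(\psi_v) + Q_D^T(\psi_0)$ and positivity of $Q_D^T$ on $\{z_1,\dots,z_l\}^\perp$ (Corollary~\ref{cor}) is a genuinely different and cleaner route to the inequality $\min_E Q^T \geq \sigma(T)$ than the paper's. The paper does not decompose at all: it takes a minimizing sequence $\psi_n$, normalizes $\xi_n=\psi_n/\|\psi_n\|$, and uses Proposition~\ref{Pr300} together with a concentration-type dichotomy to show boundedness, then extracts a weak limit, then derives the Steklov--Euler--Lagrange equation via Lagrange multipliers, and finally identifies the value with $\sigma(T)$ via Lemma~\ref{este}. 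Your argument bypasses Lagrange multipliers entirely: once you know the minimum is not decreased by setting $\psi_0=0$, everything reduces to the boundary Rayleigh quotient $J_T(v)/\int v^2$, and Lemma~\ref{este} does the identification directly. That is a real conceptual simplification. You also correctly flag the analytic bookkeeping (extending Lemma~\ref{le32} and Lemma~\ref{Le301} to $H^{1/2}$ boundary data, and the density of $C^{2,\alpha}$ traces) that the clean picture needs.

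The one place where your argument has a genuine gap is attainment. You write that because $H_T$ is a self-adjoint Fredholm operator of index zero and bounded below, ``its spectrum is therefore discrete.'' That implication is false in general: a bounded self-adjoint Fredholm operator of index zero can have continuous spectrum, and Lemma~\ref{Le332} by itself does not give you discreteness. What actually forces discrete spectrum here is a compactness statement --- for example, that $(H_T-\lambda)^{-1}$ is compact on $L^2(\R/\Z)$ for $\lambda$ below the spectrum, which in turn comes from the compact trace embedding $H^{1/2}\hookrightarrow L^2$; or, more in the spirit of the paper, one runs a direct compactness argument on a minimizing sequence. Since the decomposition by itself only gives the lower bound and does not produce a minimizer, some such compactness step is indispensable. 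The cleanest fix is to combine your decoupling with the paper's boundedness-of-minimizing-sequences argument applied to $v_n=\psi_n|_{\partial C^T_1}(T\cdot)$: your decomposition shows you may assume $\psi_n = \psi_{v_n}$, and then the paper's dichotomy (Cases 1 and 2, using Proposition~\ref{Pr300}) yields a bounded subsequence and hence a minimizer. A minor further point, which you and the paper both elide, is that the trace of $\psi\in E$ is not a priori even in $t$, whereas $\sigma(T)$ is an infimum over even functions; this is resolved by the translation/reflection symmetry of $C^T_1$ (Fourier decomposition in $t$), but deserves a sentence.
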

\begin{proof}
Let us define
\begin{equation}\label{eq323}
  \mu:=\inf\Bigg\{Q^{T}_{D}(\psi):\psi\in E,\int_{\partial C^T_1}\psi^{2}= 1 \Bigg\} \in [-\infty, +\infty).
\end{equation}

We first show that $\mu$ is achieved. On that purpose, take $\psi_{n}\in E$ such that $Q^{T}_{D}(\psi_{n})\rightarrow\mu .$

We claim that $\psi_{n}$ is bounded. Reasoning by contradiction, if $\|\psi_{n}\|_{H^1}\rightarrow +\infty$, we define $\xi_{n}=\|\psi_{n}\|_{H^1}^{-1}\psi_{n};$ we can suppose that up to a subsequence $\xi_{n}\rightharpoonup \xi_{0}.$ Notice that $\int_{\partial C^T_1}\xi_{n}^{2}\rightarrow0,$ which yields that $\xi_{0}\in H^{1}_{0,r}(C^T_1).$ We also point out that
\[\int_{C^T_1}f'(\phi_{1})\xi_{n}^{2}\rightarrow\int_{C^T_1}f'(\phi_{1})\xi_{0}^{2} \, , \  \int_{C^T_1} \xi_0 z_j =0, \ j=1, \dots ,l\,.\]
Let us consider the following two cases:\\
\textbf{Case 1:} $\xi_{0}=0.$ In this case
\[Q^{T}_{D}(\psi_{n})=\|\psi_{n}\|^{2}\int_{C^T_1}\big(|\nabla\xi_{n}|^{2}-f'(\phi_{1})\xi_{n}^{2}\big)\rightarrow+\infty\,,\]
which is impossible.\\
\textbf{Case 2:} $\xi_{0}\neq 0.$ In this case
\begin{align*}
\mathop {\liminf}\limits_{n\rightarrow\infty}Q^{T}_{D}(\psi_{n})&=\mathop {\liminf}\limits_{n\rightarrow\infty}
\|\psi_{n}\|^{2}\int_{C^T_1}\big(|\nabla\xi_{n}|^{2}-f'(\phi_{1})\xi_{n}^{2}\big)\\
&\geq\mathop {\liminf}\limits_{n\rightarrow\infty}\|\psi_{n}\|^{2}Q^{T}_{D}(\xi_{0})\,,
\end{align*}
but $Q^{T}_{D}(\xi_{0})>0$ by Prop. \ref{Pr300}. This is again a contradiction.

Thus, $\psi_{n}$ is bounded, so up to a subsequence we can pass to the weak limit $\psi_{n}\rightharpoonup\psi.$ Then, $\psi$ is a minimizer for $\mu$ and in particular $\mu>-\infty.$

By the Lagrange multiplier rule, there exist $\theta_{1},\theta_{2}$ and $\zeta_1, \dots ,\zeta_l$ real numbers so that for any $\rho\in H_{r}^{1}(C^T_1),$
\[\int_{C^T_1}\left(\nabla\psi\nabla\rho-f'(\phi_{1})\psi\rho + \rho \sum_{i=1}^{l} \zeta_i z_i\right)=\int_{\partial C^T_1}\rho((\theta_{1}+c)\psi+\theta_{2}).\]
Taking $\rho=z_j$ above we conclude that $\zeta_j=0$. Moreover, if we take $\rho=\psi$ and $\rho=\kappa$ (given by (\ref{eq203})), we conclude that $\theta_{1}+c=\mu$ and $\theta_{2}=0$, respectively. In other words, $\psi$
 is a (weak) solution of
\begin{equation}\label{eq325}
  \begin{cases}
  \Delta\psi+f'(\phi_{1})\psi=0 &\mbox{in $C^T_1$\,, }\\
  \partial_{\nu}\psi=\mu\psi &\mbox{on $\partial C^T_1$\,.}
  \end{cases}
\end{equation}
By the regularity theory, $\psi\in C_{r}^{2,\alpha}(C^T_1)$. Define $v(t) = \psi|_{\partial C^T_1}(T\, t)$. Observe that:
$$
\int_0^1 v^2 = \frac{1}{T \omega_n}, \ J_T(v)= \frac{1}{T \omega_n} Q^T(\psi)= \frac{1}{T \omega_n} \mu\,.
$$
In the second equality Lemma \ref{este} has been used. After a suitable renormalization we obtain that $\sigma(T) \leq \mu$. But, again by Lemma \ref{este}, the reversed inequality is trivially satisfied, and the proof is concluded.
\end{proof}

\medskip

We are now in conditions to prove the main result of this section:

\begin{proposition} \label{Pr34} Define $T_{*}=\frac{2 \pi}{\sqrt{-\gamma_{1}}}$, where $\gamma_{1}$ is given in \eqref{gamma1}. Observe that by Lemma \ref{lema2.2}, $T^*$ is well defined and $T_{*} \in (0, \overline{T})$. Then:
	%There exists $T_{*}>0$ such that
	\begin{itemize}
		\item[(i)] if $T \in (0,T_{*}),$ then $\sigma(T)>0;$
		\item[(ii)] if $T=T_{*},$ then $\sigma(T)=0;$
		\item[(iii)] if $T \in (T_{*}, \, \overline{T})$ then $\sigma(T)<0.$
	\end{itemize}
	Moreover, $\emph{Ker}(H_{T_{*}})=\mathbb{R}\cos(2\pi t).$ In particular, $\emph{dim Ker}(H_{T_{*}})=1.$
\end{proposition}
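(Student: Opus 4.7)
The plan is to connect $\sigma(T)$ to the eigenvalue $\beta$ from Proposition 3.1, observing that the signs of these two quantities agree, and then to exploit the explicit characterization of $\beta$ to pin down the sign of $\sigma(T)$ on each interval. By Lemma 6.2, $\sigma(T)$ equals the minimum of $Q^T$ on the subspace $E$ under the boundary $L^2$ normalization, whereas $\beta$ is the same minimum but with the bulk $L^2$ normalization. In particular, since both minima are attained and are over the same space $E$, the sign of $\sigma(T)$ equals the sign of $\beta$, provided that the minimizer for $\beta$ has a nonvanishing boundary trace. This last property comes for free from the explicit description in Proposition 3.1: in the regime $\gamma_1+4\pi^2/T^2<\gamma_{D_{l+1}}$ the minimizer is $\psi_1(x)\cos(2\pi(t+\delta)/T)$, and $\psi_1|_{\partial B}\neq 0$ (otherwise $\psi_1$ would satisfy a Cauchy problem for the ODE with zero data, forcing $\psi_1\equiv 0$).

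Now I would compute $\beta=\min\{\gamma_{D_{l+1}},\gamma_1+4\pi^2/T^2\}$ case by case. Since $\gamma_{D_{l+1}}>0$ by the definition of $l$, and $\gamma_1<0$ by Lemma 2.2, the quantity $\gamma_1+4\pi^2/T^2$ vanishes precisely at $T=T_*=2\pi/\sqrt{-\gamma_1}$, is positive for $T<T_*$, and negative for $T>T_*$. In particular, for $T<T_*$ we have $\beta>0$ (both candidates are positive), hence $\sigma(T)>0$; for $T_*<T<\overline T$ the term $\gamma_1+4\pi^2/T^2$ is negative and therefore strictly below $\gamma_{D_{l+1}}$, giving $\beta<0$, and the minimizer described above gives rise to a test function in the definition of $\sigma(T)$ after normalization on the boundary, so $\sigma(T)<0$. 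At $T=T_*$ one has $\beta=0$; the lower bound $\sigma(T_*)\geq 0$ follows because $Q^{T_*}\geq 0$ on $E$, and the upper bound $\sigma(T_*)\leq 0$ follows by plugging the even-in-$t$ minimizer $\psi_1(x)\cos(2\pi t/T_*)$ (obtained by choosing $\delta=0$ in Proposition 3.1) into the variational characterization of Lemma 6.2.

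For the kernel, if $v\in\mathrm{Ker}(H_{T_*})$ with $v\in C^{2,\alpha}_{e,m}(\mathbb{R}/\mathbb{Z})$, then $J_{T_*}(v)=0$; by Lemma 6.1 the associated $\psi_v\in E$ (Lemma 5.3) satisfies $Q^{T_*}(\psi_v)=0=\beta$. Hence $\psi_v$ is a minimizer for $\beta$, and Proposition 3.1 forces $\psi_v(x,t)=a\,\psi_1(x)\cos(2\pi(t+\delta)/T_*)$. The evenness of $v$ in $t$ restricts $\delta$ to values producing $\pm\cos(2\pi t/T_*)$, so $v$ is a scalar multiple of $\cos(2\pi t)$. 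Conversely, a direct check shows that $v_0(t)=\cos(2\pi t)$ lies in the kernel: one verifies that $\psi_{v_0}(x,t)=\psi_1(x)\cos(2\pi t/T_*)/\psi_1(1)$ solves \eqref{eq23} because $\gamma_1+4\pi^2/T_*^2=0$, and then the boundary relation $\partial_\nu\psi_1+c\,\psi_1=0$ for $\psi_1$ at $\partial B$ implies $H_{T_*}(v_0)=0$.

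The main potential obstacle is the subtlety between the two normalizations (boundary versus bulk $L^2$) together with the restriction to even-in-$t$ test functions in the definition of $\sigma(T)$, versus the lack of such a parity restriction in the definition of $\beta$. This is resolved by observing that the minimizer of $\beta$ produced by Proposition 3.1 can always be chosen even in $t$ (by selecting $\delta=0$), so no minimax gap is introduced, and that evenness plus the characterization of the minimizer force $\delta\in\{0,T_*/2\}$ in the kernel analysis, yielding one-dimensionality.
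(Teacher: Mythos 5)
Your proof is correct and follows essentially the same route as the paper's: the paper simply points to Lemma \ref{le35} and Proposition \ref{Pr300} (together with the evenness remark), and your argument is a careful expansion of exactly that chain, including the verification that the sign equivalence between $\sigma(T)$ and $\beta$ holds despite the different normalizations, and the identification of the kernel via the explicit minimizer from Proposition \ref{Pr300} and the relation $\gamma_1+4\pi^2/T_*^2=0$.
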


\begin{proof} It follows from Lemma \ref{le35} and Proposition \ref{Pr300}, taking into account that $C^{2,\alpha}_{e,m}(\R / \Z)$ contains only even functions.	
\end{proof}

\numberwithin{equation}{section}\section{The bifurcation argument }
\label{Section 7}

In this section, we are in position to prove our main Theorem ~\ref{Th11} by the bifurcation argument. For the sake of completeness, we will now recall the  bifurcation theorem, which is due to Crandall and Rabinowitz. For the proof and for many other applications we refer to ~\cite{K04,S94} and to the original exposition ~\cite[Theorem 1.7]{CR71}.

\begin{theorem} \label{Th401} \textbf{\mbox{(Crandall-Rabinowitz Bifurcation Theorem)}}
	Let $X$ and $Y$ be Banach spaces, and let $U\subset X$ and $\Gamma\subset\mathbb{R}$ be open sets, where we assume $0\in U$. Denote the elements of $U$ by $v$ and the elements of $\Gamma$ by $T$. Let $G:U\times\Gamma\rightarrow Y$ be a $C^{1}$ operator such that
	\begin{itemize}
		\item[i)] $G(0,T)=0$ for all $T\in\Gamma;$
		\item[ii)] $\emph{Ker~} D_{v}G(0,T_{*})=\mathbb{R}\,w$ for some $T_{*}\in\Gamma$ and some $w\in X\setminus\{0\};$
		\item[iii)] $\emph{codim Im~} D_{v}G(0,T_{*})=1;$
		\item[iv)] The cross derivative $D_{T}D_{v}G$ exists and is continuous, and $ D_{T}D_{v}G(0,T_{*})(w)\notin \emph{Im~} D_{v}G(0,T_{*}).$
	\end{itemize}
	Then there is a nontrivial continuous curve
	\begin{equation}\label{eq501}
	s \to (v(s), T(s)) \in X \times \Gamma\, ,
	\end{equation}
	$s \in (-\delta,+\delta)$ for some $\delta>0$, such that $(v(0),T(0))=(0,T_{*})$, $v(s) \neq 0 $ if $s \neq 0$
	and \[G\big(v(s),s\big)=0 \quad for \quad s\in(-\delta,+\delta)\,.\]
	Moreover there exists a neighborhood $\mathcal{N}$ of $(0,T_{*})$ in $X \times \Gamma$ such that all solutions of the equation $G(v,T)=0$ in $\mathcal{N}$ belong to the trivial solution line $\{(0, T)\}$ or to the curve (\ref{eq501}). The intersection $(0,T_{*})$ is called a bifurcation point.
\end{theorem}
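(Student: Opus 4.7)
The plan is a Lyapunov--Schmidt reduction followed by two applications of the Implicit Function Theorem, which is the classical route to Crandall--Rabinowitz. First I would fix closed complements: write $X = \mathbb{R}\,w \oplus X_1$ (possible since $\ker D_v G(0, T_*) = \mathbb{R} w$ is one-dimensional, hence complemented) and $Y = Y_0 \oplus \mathbb{R}\,\eta$, where $Y_0 := \operatorname{Im} D_v G(0, T_*)$ is closed of codimension $1$ by (iii) and $\eta$ is any element outside $Y_0$. Let $P$ and $Q = I - P$ denote the associated projections onto $Y_0$ and $\mathbb{R}\eta$. Writing $v = sw + z$ with $s \in \mathbb{R}$ and $z \in X_1$, the equation $G(v, T) = 0$ splits into the $Y_0$-equation $PG(sw+z,T) = 0$ and a scalar equation $QG(sw+z,T) = 0$.

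The first step is to solve the $Y_0$-equation for $z$ as a function of $(s, T)$. Setting $F(s, z, T) := PG(sw+z, T)$, we have $F(0,0,T_*) = 0$ by (i), and the partial derivative $D_z F(0,0,T_*) = P \circ D_v G(0, T_*)\bigr|_{X_1}: X_1 \to Y_0$ is a bounded linear bijection, since its image is $PY_0 = Y_0$ and its kernel is $X_1 \cap \ker D_v G(0, T_*) = \{0\}$. Hence IFT produces a $C^1$ map $z(s, T)$ defined near $(0, T_*)$ with $z(0, T_*) = 0$ and $F(s, z(s,T), T) \equiv 0$; the uniqueness clause together with (i) forces $z(0, T) \equiv 0$ near $T_*$.

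The second step is to analyze the remaining scalar equation $\Phi(s, T) := QG(sw + z(s, T), T) = 0$. Since $\Phi(0, T) \equiv 0$ by (i), one may factor $\Phi(s,T) = s\,\Psi(s,T)$ via $\Psi(s,T) = \int_0^1 \partial_s \Phi(\tau s, T)\, d\tau$; for $s \neq 0$ the problem reduces to $\Psi = 0$. Differentiating the identity $F(s, z(s,T_*), T_*) = 0$ in $s$ at $s=0$ and inverting $D_z F(0,0,T_*)$ gives $z_s(0, T_*) = 0$, whence
\[
\Psi(0, T_*) = Q\,D_v G(0, T_*)(w) = 0, \qquad \partial_T \Psi(0, T_*) = Q\,D_T D_v G(0, T_*)(w),
\]
and the latter is nonzero exactly by hypothesis (iv). A second application of IFT to $\Psi$ then produces a $C^1$ curve $s \mapsto T(s)$ with $T(0) = T_*$ and $\Psi(s, T(s)) \equiv 0$; setting $v(s) = sw + z(s, T(s))$ gives the bifurcating branch, which is nontrivial for $s\neq 0$ because $sw \notin X_1$. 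The local uniqueness statement then follows from the uniqueness clauses in both IFTs: any solution of $G=0$ near $(0, T_*)$ decomposes as $v = sw + z(s, T)$ with either $s = 0$ (forcing $v = 0$) or $\Psi(s, T) = 0$ (forcing $T = T(s)$).

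The main technical point will be the regularity of $\Psi$: the second IFT requires $\Psi$ to be at least $C^1$. The factorization $\Psi = \int_0^1 \partial_s \Phi \, d\tau$ is only as smooth as $\partial_s \Phi$, and establishing continuity of the mixed derivative $\partial_T \partial_s \Phi$ reduces, after the chain rule through $z(s, T)$, precisely to the continuity of $D_T D_v G$. This is why hypothesis (iv) is phrased in terms of the cross derivative rather than requiring $G \in C^2$, and verifying that this single extra regularity assumption is enough (and is used only at this one spot) is the delicate point of the argument.
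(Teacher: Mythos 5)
The paper itself does not prove Theorem \ref{Th401}; it is quoted, with the proof delegated to \cite{CR71,K04,S94}, so your argument has to be measured against the classical one. Your Lyapunov--Schmidt scheme is the standard route, and most individual steps are correct: the splittings $X=\R\,w\oplus X_1$ and $Y=Y_0\oplus\R\,\eta$ (closedness of $Y_0$ follows from its finite codimension), the first IFT giving $z(s,T)$ with $z(0,T)\equiv 0$, the computation $z_s(0,T_*)=0$, the identity $\Psi(0,T_*)=0$, and the value $\partial_T\Psi(0,T_*)=Q\,D_TD_vG(0,T_*)(w)\neq0$. If $G$ were assumed $C^2$, this would essentially be the textbook proof and would be complete.

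The genuine gap is exactly at the point you flag in your last paragraph, and your resolution of it is not correct. The second IFT requires $\partial_T\Psi$ to exist near $(0,T_*)$ and to be continuous there, i.e.\ control of $\partial_T\partial_s\Phi$ off the line $s=0$. But $\partial_s\Phi(s,T)=Q\,D_vG\bigl(sw+z(s,T),T\bigr)\bigl(w+z_s(s,T)\bigr)$, and differentiating this in $T$ by the chain rule produces the terms $D^2_{vv}G(\cdot)\bigl[z_T(s,T),\cdot\bigr]$ and $\partial_T z_s(s,T)$; neither is available under the stated hypotheses, so continuity of $\partial_T\partial_s\Phi$ does \emph{not} ``reduce precisely to the continuity of $D_TD_vG$''. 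This matters here: the theorem is stated with minimal regularity precisely because the paper's operator $G$ is only $C^1$ in $v$ with a continuous cross derivative, not $C^2$. Two repairs exist. The original Crandall--Rabinowitz argument avoids the two-step reduction altogether: apply a single IFT to $\tilde G(s,z,T):=s^{-1}G\bigl(s(w+z),T\bigr)$ for $s\neq 0$, extended by $D_vG(0,T)(w+z)$ at $s=0$, solving for $(z,T)\in X_1\times\R$ as a function of $s$; its partial derivatives in $(z,T)$ are $D_vG\bigl(s(w+z),T\bigr)$ and $s^{-1}D_TG\bigl(s(w+z),T\bigr)=\int_0^1 D_TD_vG\bigl(\sigma s(w+z),T\bigr)(w+z)\,d\sigma$ (using $G(0,T)\equiv0$, hence $D_TG(0,T)=0$, plus symmetry of the mixed derivative, which itself deserves a word), so only $D_vG$ and the cross derivative enter, and at $(0,0,T_*)$ the map $(\zeta,\tau)\mapsto D_vG(0,T_*)\zeta+\tau\,D_TD_vG(0,T_*)w$ is an isomorphism by (ii)--(iv). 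Alternatively, your scheme can be salvaged, but only with quantitative estimates you do not supply: $\|z(s,T)\|=O(|s|)$ and $\|z_T(s,T)\|=O(|s|)$ (the latter from $z_T=-\bigl(P D_vG|_{X_1}\bigr)^{-1}P D_TG(sw+z,T)$ together with $D_TG(0,T)=0$ and the cross derivative), combined with $Q\,D_vG(0,T_*)=0$, give $s^{-1}Q\,D_vG(sw+z,T)z_T=s^{-1}Q\bigl[D_vG(sw+z,T)-D_vG(0,T_*)\bigr]z_T\to 0$, which yields continuity of $\partial_T\Psi$ at $(0,T_*)$ without ever invoking $D^2_{vv}G$ or $\partial_T z_s$. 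As written, the proposal does not close this step.
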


Theorem ~\ref{Th11} follows immediately from the following proposition and the Crandall-Rabinowitz theorem.
\begin{proposition} \label{Pr42} The linearized operator $D_{v}G(0,T_{*})$ has a 1-dimensional kernel spanned by the function $w=\cos (2\pi t)$, that is,
	\[\emph{Ker~} D_{v}G(0,T_{*})=\mathbb{R}w.\]
	The cokernel of $D_{v}G(0,T_{*})$ is also 1-dimensional, and
	\[D_{T}D_{v}G(0,T_{*})(w)\notin \emph{Im~} D_{v}G(0,T_{*}).\]
\end{proposition}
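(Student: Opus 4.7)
By Proposition \ref{Pr31}, $D_v G(0,T) = -\phi_1'(1) H_T$, so all three assertions reduce to corresponding statements about $H_{T_*}$. Proposition \ref{Pr34} gives $\mathrm{Ker}(H_{T_*}) = \mathbb{R}\cos(2\pi t)$, so the kernel is one-dimensional and spanned by $w := \cos(2\pi t)$. Since $H_{T_*}$ is a Fredholm operator of index zero (Lemma \ref{Le332}), its cokernel is also one-dimensional. The essential self-adjointness from Lemma \ref{Le332} moreover implies that for any $u = H_{T_*}(v) \in \mathrm{Im}(H_{T_*})$ one has $\int_0^1 u\, w\, dt = \int_0^1 v\, H_{T_*}(w)\, dt = 0$, so $\mathrm{Im}(H_{T_*})$ is $L^2$-orthogonal to $w$.

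For the transversality condition, my plan is to produce $\psi_{w,T}$ explicitly by separation of variables. Try the ansatz
\[
\psi_{w,T}(x,t) = \eta_T(x) \cos(2\pi t/T),
\]
which matches the boundary condition $\psi_{w,T}|_{\partial C_1^T} = \cos(2\pi t/T) = \tilde{w}(t)$ whenever $\eta_T$ is radial with $\eta_T(1) = 1$. Substituting into \eqref{eq23} reduces the problem to the ODE
\[
\Delta \eta_T + \big(f'(\phi_1) - \lambda(T)\big) \eta_T = 0 \mbox{ in } B, \qquad \lambda(T) := (2\pi/T)^2.
\]
Lemma \ref{lema2.2} gives $\gamma_1 < \gamma_{D_1} \le \gamma_{D_j}$ for every $j$, so $\lambda(T_*) = -\gamma_1$ is not a Dirichlet eigenvalue of $L_D$; the homogeneous Dirichlet problem thus admits only the trivial solution, Fredholm theory yields a unique $\eta_T$ depending smoothly on $T$ in a neighborhood of $T_*$, and uniqueness in Lemma \ref{le32} identifies this with $\psi_{w,T}$. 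The same nondegeneracy forces $\psi_1(1)\neq 0$ (otherwise $\psi_1\in H^1_{0,r}(B)$ would be a Dirichlet eigenfunction for $\gamma_1 < \gamma_{D_1}$), so after renormalizing $\psi_1(1) = 1$ we obtain $\eta_{T_*} = \psi_1$. Inserting the ansatz into \eqref{eq25} gives
\[
H_T(w)(t) = \big(\eta_T'(1) + c\big)\cos(2\pi t),
\]
and the identity $H_{T_*}(w) = 0$ corresponds exactly to the boundary condition $\psi_1'(1) + c = 0$ from \eqref{eqlineal}.

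It then remains to show $\frac{d}{dT}\eta_T'(1)\big|_{T=T_*} \neq 0$. Setting $\dot\eta := \partial_T \eta_T|_{T=T_*}$ and differentiating the ODE in $T$ at $T_*$ yields
\[
\Delta \dot\eta + \big(f'(\phi_1) + \gamma_1\big)\dot\eta = \dot\lambda_* \, \psi_1 \mbox{ in } B, \qquad \dot\eta(1) = 0,
\]
with $\dot\lambda_* = \lambda'(T_*) = -8\pi^2/T_*^3 \neq 0$. Multiplying by $\psi_1$, integrating by parts twice, and using that $\psi_1$ solves the homogeneous equation, all interior terms cancel and only the boundary contribution $\int_{\partial B} \psi_1 \, \partial_\nu \dot\eta = \omega_n \dot\eta'(1)$ survives, giving
\[
\omega_n \dot\eta'(1) = \dot\lambda_* \int_B \psi_1^2 \neq 0.
\]
Consequently $D_T D_v G(0, T_*)(w) = -\phi_1'(1)\,\dot\eta'(1)\, w$ is a nonzero multiple of $w$ and, by the orthogonality remark at the end of the first paragraph, does not belong to $\mathrm{Im}(D_v G(0,T_*))$. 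The subtle point of the argument is this duality computation, which rests crucially on Lemma \ref{lema2.2}: the strict inequality $\gamma_1 < \gamma_{D_1}$ simultaneously makes the ODE for $\eta_T$ solvable near $T_*$ and guarantees $\psi_1(1) \neq 0$.
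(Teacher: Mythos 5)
Your proof is correct, and the kernel/cokernel part (via Propositions \ref{Pr31} and \ref{Pr34}, Lemma \ref{Le332}, and the self-adjointness orthogonality) matches the paper exactly. The transversality computation is executed by a slightly different, and arguably cleaner, route. The paper invokes Lemma \ref{este} to rewrite $\int_0^1 H_T(w)w = \frac{1}{T\omega_n}Q^T(\psi_{w,T})$ and then evaluates the $T$-derivative by substituting the explicit test function $\psi_1(x)\cos(2\pi t/T)$ in place of $\psi_{w,T}$; since this test function coincides with the true solution $\psi_{w,T}$ only at $T=T_*$, the paper's passage silently relies on the stationarity of $Q^T$ at the actual solution with fixed boundary data (an envelope-type argument that is not spelled out). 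You instead identify $\psi_{w,T}=\eta_T(x)\cos(2\pi t/T)$ via the separated ODE, so that $H_T(w)=(\eta_T'(1)+c)\cos(2\pi t)$, and compute $\partial_T\eta_T'(1)\big|_{T_*}$ directly by differentiating the ODE and pairing with $\psi_1$ by Green's identity; this produces the same value $-\dfrac{4\pi^2}{\omega_n T_*^3}\int_B\psi_1^2$ without passing through $Q^T$. Your explicit remark that Lemma \ref{lema2.2} (the strict inequality $\gamma_1<\gamma_{D_1}$) both makes the $\eta_T$-problem solvable near $T_*$ and forces $\psi_1(1)\neq 0$ — which is the implicit normalization needed for the identification $\eta_{T_*}=\psi_1$ and for the paper's formula $\psi_w=\psi_1(x)\cos(2\pi t/T)$ to make sense — is a clarification the paper leaves tacit. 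Both approaches ultimately rest on the same separated-variables structure; yours trades the quadratic-form bookkeeping for an ODE duality identity, which makes the differentiation step fully explicit.
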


\begin{proof} Recall from the Proposition \ref{Pr31}, we know that $D_{v}G(0,T_{*})= - \phi_1'(1) H_{T_{*}}.$ Then we have
	\begin{equation*}
	\mbox{Im~} D_{v}G(0,T_{*})=\mbox{Im~}H_{T_{*}}.
	\end{equation*}
	By the Proposition \ref{Pr34}, %$Q(\psi,\psi)$ is negative, and the fact that $Q^{\lambda}$ is positive semidefinite but has $0$ eigenvalue for $\lambda=\lambda^{*}$. This implies
	we have that the kernel of the linearized operator $D_{v}G(0,T_{*})$ has dimension $1$ and can be spanned by the function $w(t)=\cos (2\pi t)$:
	\[\mbox{Ker~}D_{v}G(0,T_{*})=\mathbb{R}\, w.\]
	Then, $\mbox{codim Im~}(H_{T_{*}})=1$ follows from the fact that $H_{T}$ is a Fredholm operator of index zero by Lemma ~\ref{Le332}.
	
	Here, we are ready to prove $ D_{T}D_{v}G(0,T_{*})(w)\notin \mbox{Im~} D_{v}G(0,T_{*}).$ Taking $\xi\in\mbox{Im~} D_{v}G(0,T_{*})=\mbox{Im~}(H_{T_{*}}),\xi=H_{T_{*}}(v),$ then we have
	\[\int_0^1 \xi w=\int_0^1 H_{T_{*}}(v)w =\int_0^1
	H_{T_{*}}(w)v=0,\]
	because of the fact $H_{T_{*}}(w)=0$. By Lemma \ref{Le332} we have
	\[\mbox{Im~}(H_{T_{*}})=\Bigg\{\xi:\int_0^1\xi w=0\Bigg\}.\]
	Recall that $D_{T}D_{v}G(0,T_{*})(w)=-\phi_1'(1) D_{T}|_{T=T_{*}}H_{T}(w),$ then, in order to prove $D_{T}D_{v}G(0,T_{*})(w)\notin \mbox{Im~} D_{v}G(0,T_{*}),$ we just need to prove that
	\[\int_0^1 \big(D_{T}|_{T=T_{*}}H_{T}(w)\big)w\neq 0.\]
	Actually, by Lemma \ref{este},
	\[\begin{aligned}
	\int_0^1 \big(D_{T}|_{T=T_{*}}H_{T}(w)w\big)
	&=\frac{d}{dT}\Big|_{T=T_{*}}\int_0^1 H_{T}(w)w\\
	&=\frac{1}{\omega_n}\frac{d}{dT}\Big|_{T=T_{*}}\left(\frac{1}{T}Q^{T}(\psi_{w})\right)\\
	&=\frac{1}{\omega_n}\frac{d}{dT}\Big|_{T=T_{*}}\left(\frac{1}{2}Q(\psi_{1})+\frac{2\pi^{2}}{T^{2}}\int_{B}\psi_{1}^{2}\right)\\
	&=-\frac{4\pi^{2}}{\omega_n T_{*}^{3}}\int_{B}\psi_{1}^{2}\neq 0,%\\ &\geq(\frac{T}{2}\lambda_{1}+\frac{\pi^{2}}{2T})\int_{B}\psi_{1}^{2}.=\frac{\pi}{\sqrt{-\lambda_{1}}}
	\end{aligned}\]
	where the passage from the second to third equality is given by the following computation of $Q^{T}(\psi)$ with the function $\psi_{w}(x,t)=\psi_{1}(x)\cos\left(\frac{2\pi t}{T}\right)$, with $\psi_1$ as in \eqref{eqlineal}:
	\[
	\begin{aligned}
	Q^{T}(\psi)&=Q^{T}_{D}(\psi)+c\int_{\partial C^T_1}\psi^{2}\\
	&=\int_{C^T_1}\big(|\nabla\psi|^{2}-f'(\phi_{1})\psi^{2}\big)+c\int_{\partial C^T_1}\psi^{2}\\
	% &=\int_{C_1}(|\nabla_{x}\psi|^{2}+\psi_{t}^{2})-f'(\phi_{1})\psi^{2}+c\int_{\partial C_1}\psi^{2}\\
	&=\int_{C^T_1}\left[|\nabla\psi_{1}|^{2}\cos^{2}\left(\frac{2\pi t}{T}\right)+\psi_{1}^{2}\left(\frac{2\pi}{T}\right)^{2}\sin^{2}\left(\frac{2\pi t}{T}\right)-f'(\phi_{1})\psi_{1}^{2}\cos^{2}\left(\frac{2\pi t}{T}\right)\right]\\
	&\quad+c\int_{\partial C^T_1}\psi_{1}^{2}\cos^{2}\left(\frac{2\pi t}{T}\right)\\
	&= \Big[\int_{B}\big(|\nabla\psi_{1}|^{2}-f'(\phi_{1})\psi_{1}^{2}\big)+c \, \omega_{n}\psi_{1}^{2}(1)\Big]\int_0^T\cos^{2}\left(\frac{2\pi t}{T}\right)\\
	&\quad+\left(\frac{2\pi}{T}\right)^{2}\int_{B}\psi_{1}^{2}\int_0^T\sin^{2}\left(\frac{2\pi t}{T}\right)\\
	&= Q(\psi_{1})\int_0^T\cos^{2}\left(\frac{2\pi t}{T}\right)+\left(\frac{2\pi}{T}\right)^{2}\int_{B}\psi_{1}^{2}\int_0^T\sin^{2}\left(\frac{2\pi t}{T}\right) \\
	&=\frac{T}{2}Q(\psi_{1})+\frac{2\pi^{2}}{T}\int_{B}\psi_{1}^{2}.
	%\\ &\geq(\frac{T}{2}\lambda_{1}+\frac{\pi^{2}}{2T})\int_{B}\psi_{1}^{2}.=\frac{\pi}{\sqrt{-\lambda_{1}}}
	\end{aligned}
	\]
\end{proof}

\medskip
%% ---------------------------------------------------
%%% ---------------------------------------------------
%%% ---------------------------------------------------
%\section*{References}

\end{document}